\documentclass{llncs}

\usepackage{graphicx}
\usepackage{amssymb}
\usepackage{amsmath}
\usepackage{amssymb,euscript}
\usepackage[english]{babel}
\usepackage[subnum]{cases}

\newtheorem{thm}{Theorem}
\newtheorem{lem}[thm]{Lemma}

\newtheorem{prb}{Problem}



\pagestyle{plain}
\setcounter{page}{1}

\begin{document}


\title{Positive Numerical Splitting Method for the Hull and White 2D Black-Scholes Equation}

\author{T. Chernogorova\inst{1} \and R. Valkov\inst{1,2} \\ $\lbrace$chernogorova,rvalkov$\rbrace$@fmi.uni-sofia.bg}
\institute{Faculty of Mathematics and Informatics, University of Sofia, 1164 Sofia, Bulgaria, \and Department of Mathematics and Computer Science, \\ University of Antwerp, 2020 Antwerp, Belgium}

\maketitle

\begin{abstract}
We consider the locally one-dimensional backward Euler splitting method to solve numerically the Hull and White problem for pricing European options with stochastic volatility in the presence of a mixed derivative term. We prove the first-order convergence of the time-splitting. The parabolic equation degenerates on the boundary $x=0$ and we apply a fitted finite volume scheme to the equation in order to resolve the degeneracy and derive the fully-discrete problem as we also investigate the discrete maximum principle. Numerical experiments illustrate the efficiency of our difference scheme.

\begin{keywords}
Hull and White, mixed derivative, operator splitting, fitted finite volume method, boundary corrections, maximum principle
\end{keywords}
\end{abstract}


\section{Introduction}

There are nowadays many generalizations of the celebrated Black-Scholes model, resulting in linear and nonlinear degenerate backward parabolic problems. Hull and White \cite{HW} proposed a model for valuing an option with stochastic volatility of the price of the underlying asset that constitutes an important two-dimensional extension of the one-dimensional Black-Scholes partial differential equation (PDE) \cite{TR,WHD}. Since no closed-form analytical formulas have been derived for any but the simplistic cases of multidimensional problems in mathematical finance computationally efficient and accurate numerical methods are needed for the general case of time- and path-dependent market parameters. In the years, numerous numerical techniques have been developed \cite{CVBoro,HH,Hout,SWHH1,IT}.

This paper focuses on the numerical solution of the Black-Scholes equation in stochastic volatility models. The features of this parabolic, two-dimensional convection-reaction-diffusion problem is the presence of a \emph{mixed spatial derivative term}, stemming from the correlation between the two underlying stochastic processes for the asset price and its variance, and the \emph{degeneracy} of the parabolic problem on a part of the domain boundary. Well-posedness of degenerate parabolic PDEs such as the Hull and White model does not follow from classical theory and additional analysis is needed \cite{GV,OR}.

Semi-discretization in space of parabolic PDEs by finite difference schemes gives rise to large systems of stiff ODEs. The two-dimensional exponentially fitted finite volume element method, constructed by Huang et al. \cite{SWHH1}, successfully resolves the degeneracy issue of the PDE problem but fails to address the efficient time stepping of the resulting semi-discrete system. Indeed, the standard $\theta$-method is not practical for multidimensional problems and therefore various splitting methods are designed, cf. Hundsdorfer and Verwer \cite{HundVer}. 

We aim at constructing an economical numerical algorithm without compromising the stability by implementing locally one-dimensional splitting backward Euler method (LOD-BE) splitting method in time. The operator splitting and temporal discretization are executed before we handle the degeneracy of the problem in space by using the fitted finite volume method, proposed by Wang \cite{SW} and further developed in \cite{A,CV}. The attractive features of our numerical method are computational efficiency, stability and positivity (short for nonnegativity) of the numerical solution.

In Section \ref{DiffProblem} we formulate the differential problem and present brief analysis of the existence and uniqueness of weak solution in weighted Sobolev spaces as well as the weak maximum principle. Section \ref{LOD} focuses on the construction and analysis of the splitting method. The full discretization is presented in Section \ref{FullDisc} where we also discuss the discrete maximum principle. In Section \ref{NumExp} we analyze experimentally the global error in discrete norms.

\section{The Differential Problem}\label{DiffProblem}

Stochastic volatility models are those where the price of the underlying asset $x$ and its instantaneous variance $y=\sigma^2$, are both considered as random (state) variables, following certain stochastic processes. Hull and White assume that these state variables obey the geometric (exponential) Brownian motion. Therefore, the price $V$ of a European option with stochastic volatility $\sqrt{y}$ and expiry date $T$ by the general PDE for derivatives satisfies the following backward parabolic problem \cite{HW}
\begin{equation}\label{HWPDE}
-\frac{\partial V}{\partial t} - \frac{1}{2} \left[ x^2y \frac{\partial ^2 V}{\partial x^2} + 2 \rho \xi x y^{3/2} \frac{\partial^2 V}{\partial x \partial y} +
\xi^2 y^2 \frac{\partial^2 V}{\partial y^2} \right] - rx \frac{\partial V}{\partial x} - \mu y \frac{\partial V}{\partial y} + rV = 0,
\end{equation}
where $(x,y,t) \in (0,X) \times (\zeta,Y) \times [0,T) =: \Omega \times [0,T)$ with the final (\emph{pay-off}) and Dirichlet boundary conditions on the boundary $\partial \Omega$ of $\Omega$
\begin{eqnarray}
& V(x,y,T)=V_T(x,y), \; (x,y) \in \Omega, \label{TC} \\
& V(x,y,t)=V_D(x,y,t), \; (x,y,t) \in \partial \Omega \times [0,T), \label{BC}
\end{eqnarray}
The parameters $\xi$ and $\mu$ are constants from the stochastic process, governing the variance $y$, $\rho$ is the instantaneous correlation between $x$ and $y$, $\zeta, \; X, \; Y$ and $T$ are positive constants, defining the solution domain. In the following considerations we assume that $V_D(x,y,t)=0$, i.e. we formally subtract some function, satisfying the boundary conditions \eqref{BC}, from both sides of \eqref{HWPDE} so that a non-zero term $g$ is introduced in the right-hand side (r.h.s) of \eqref{HWPDE}.

In this paper we assume that $\rho \in [0,1)$ is a constant, consistent with the considerations in \cite{HW,SWHH1}. It is also reasonable to assume that $y \geq \zeta$ for a (small) positive constant $\zeta$ since the $y=0$ is trivial as the volatility of the stock is zero in the market and therefore the price of the option is deterministic.

Introducing the new variable $u=\exp(\beta t)V$, where $\beta > 0$ is an arbitrary constant, \eqref{HWPDE} is rewritten as the forward parabolic nonhomogeneous equation
\begin{equation}\label{HWPDE2}
\frac{\partial u}{\partial t} - \frac{1}{2} \left[ x^2y \frac{\partial ^2 u}{\partial x^2} + 2 \rho \xi x y^{3/2} \frac{\partial^2 u}{\partial x \partial y} +
\xi^2 y^2 \frac{\partial^2 u}{\partial y^2} \right] - rx \frac{\partial u}{\partial x} - \mu y \frac{\partial u}{\partial y} + (r+\beta)u = g,
\end{equation}
with the homogeneous boundary condition on $\partial \Omega$. Further in our analysis we refer to the forward problem with initial data, corresponding to \eqref{TC}.

\subsection{Well-posedness and maximum principle}

The well-posedness considerations in this subsection are presented by Huang et al. \cite{SWHH2}. We extend their variational analysis by deriving the weak maximum principle for equation \eqref{HWPDE2}, written in the following divergence form
\begin{equation}\label{HWPDE3}
\frac{\partial u}{\partial t}-\nabla \cdot (k(u)) + cu = g,
\end{equation}
$k(u)=A \nabla u + \textbf{b} u$ is the flux, $\textbf{b}=\left(rx-\frac{3}{4}\rho y^{1/2} \xi x - yx, \mu y -\frac{1}{2}\rho \xi y^{3/2} - \xi^2 y\right)^T$,
\begin{eqnarray}\label{divformAc}
\begin{split}
A=\begin{pmatrix}
    a_{11} & a_{12} \\
    a_{21} & a_{22} \\
  \end{pmatrix}
  =\begin{pmatrix}
    \frac{1}{2}yx^2 & \frac{1}{2}\rho y^{3/2}\xi x \\
    \frac{1}{2}\rho y^{3/2}\xi x & \frac{1}{2}\xi^2 y^2 \\
  \end{pmatrix}, \\
  c=\beta + 2r -\frac{3}{4}\rho y^{1/2} \xi - y + \mu - \frac{3}{4} \rho y^{1/2} \xi - \xi^2.
\end{split}
\end{eqnarray}

Let $L^{p}(\Omega)$ denote the space of all $p$-integrable functions on $\Omega$ for $p \geq 1$. For $p=2$ the inner product on $L^{2}(\Omega)$ is given by $(u,v):=\int_{\Omega} uv d\Omega$ with the norm $\|v\|_0^2 :=\int_{\Omega} v^2 d\Omega$. 
To handle the degeneracy in the Hull and White problem the weighted inner product on $\left( L^2(\Omega) \right)^2$ is introduced by $(\textbf{u},\textbf{v})_{\hat{\omega}}:=\int_{\Omega} (yx^2 u_1v_1 + y^2 u_2v_2)d\Omega$ for any $\textbf{u}=(u_1,u_2)^T$ and $\textbf{v}=(v_1,v_2)^T \in \left( L^2(\Omega) \right)^2$. The corresponding weighted $L^2$-norm is
\begin{equation*}
\|\textbf{v}\|_{0,\hat{\omega}}:=\sqrt{(\textbf{v},\textbf{v})_{\hat{\omega}}}=\left( \int_{\Omega} (yx^2v_1^2 +y^2v_2^2)d\Omega \right)^{1/2}.
\end{equation*}
The space of all weighted square-integrable functions is defined as
\begin{equation*}
\textbf{L}_{\hat{\omega}}^2(\Omega):=\left\{ \textbf{v} \in \left( L^2(\Omega) \right)^2:\|v\|_{0,\hat{\omega}} < \infty \right\}.
\end{equation*}
The pair $\left( \textbf{L}_{\hat{\omega}}^2(\Omega),(\cdot,\cdot)_{\hat{\omega}} \right)$ is a Hilbert space (cf., for example, \cite{Kufner}) and further the weighted Sobolev space $H_{\hat{\omega}}^{1}(\Omega)$ is given by
\begin{equation*}
H_{\hat{\omega}}^{1}(\Omega)=\left\{ v:v \in L^2(\Omega), \nabla v \in \textbf{L}_{\hat{\omega}}^2(\Omega) \right\}
\end{equation*}
with the energy norm $\|v\|_{1,\hat{\omega}}^2=|v|_{1,\hat{\omega}}^2 + \|v\|_0^2$ for any $v \in H_{\hat{\omega}}^{1}(\Omega), |v|_{1,\hat{\omega}}^2=\|\nabla v\|_{0,\hat{\omega}}^2$.

The discussion in \cite{SWHH2} reveals that boundary condition at $x=0$ is not needed because of the degeneracy of the equation at this part of boundary, i.e. the solution to Problem \ref{HWPDE} can not take a trace at $x=0$ and this also holds true for the discrete problem. Detailed considerations of this issue can be found in \cite{OR,Zhu}. Nevertheless, when we solve the problem numerically, we may simply choose a particular solution with a homogeneous trace at $x=0$.

The boundary segments of $\Omega$ with $x=X,y=\zeta$ and $y=Y$ are denoted by $\partial \Omega_{D}=\left\{ (x,y) \in \partial \Omega: x \neq 0 \right\}$ so that we introduce
\begin{equation*}
H_{0,\hat{\omega}}^{1}(\Omega)=\left\{ v:v \in H_{\hat{\omega}}^{1}(\Omega) \; \mbox{and} \; v \left|_{\partial \Omega_D} = 0 \right. \right\}.
\end{equation*}
We define the following variational problem, corresponding to \eqref{HWPDE3} and \eqref{TC},\eqref{BC}.
\begin{prb}\label{weakform}
Find $u(t) \in H_{0,\hat{\omega}}^{1}(\Omega)$, satisfying the pay-off \eqref{TC} such that for all $v \in H_{0,\hat{\omega}}^{1}(\Omega)$
\begin{equation*}
\left( \frac{\partial u(t)}{\partial t},v \right) + \textbf{B}(u(t),v;t)=(g,v) \; \mbox{a.e. in} \; (0,T),
\end{equation*}
where
\begin{equation*}
\textbf{B}(u(t),v;t)=(A\nabla u + \underline{b}u,\nabla v)+(cu,v)
\end{equation*}
is a bilinear form and $A, \; \underline{b}$ and $c$ are defined in \eqref{HWPDE3} and \eqref{divformAc}.
\end{prb}

\begin{thm} \cite{SWHH2}
The bilinear form $\textbf{B}(\cdot,\cdot)$ is coercive in $H_{0,\hat{\omega}}^{1}(\Omega)$
\begin{equation*}
\textbf{B}(v,v;t) \geq C \|v\|_{1,\hat{\omega}}^2,
\end{equation*}
where $C$ denotes a positive constant, independent of $v$, and continuous in $H_{0,\hat{\omega}}^{1}(\Omega)$
\begin{equation*}
\textbf{B}(v,w;t) \leq M \|v\|_{1,\hat{\omega}}\|w\|_{1,\hat{\omega}}.
\end{equation*}
There exists an unique solution of Problem \ref{weakform}.
\end{thm}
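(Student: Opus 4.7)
The plan is to verify coercivity and continuity of $\textbf{B}(\cdot,\cdot;t)$ on $H_{0,\hat{\omega}}^1(\Omega)$, uniformly in $t\in[0,T]$, and then invoke the Lions--Lax--Milgram theorem for parabolic variational problems to obtain existence and uniqueness.

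For coercivity I would first establish uniform ellipticity of the diffusion matrix $A$ with respect to the weight $\mathrm{diag}(yx^2,y^2)$. A direct computation gives $\det A=\tfrac14\xi^2(1-\rho^2)y^3x^2$, which is strictly positive on $\Omega$ because $\rho\in[0,1)$, $\xi>0$ and $y\geq\zeta>0$. Together with the positivity of the diagonal entries of $A$, a short $2\times 2$ eigenvalue argument (after factoring the weights $yx^2$ and $y^2$ out of the quadratic form) delivers
\begin{equation*}
(A\nabla v,\nabla v)\;\geq\;\alpha\int_\Omega\bigl(yx^2 v_x^2+y^2 v_y^2\bigr)\,d\Omega\;=\;\alpha\,|v|_{1,\hat{\omega}}^2
\end{equation*}
with $\alpha>0$ depending only on $\rho$. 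For the convection piece I would write each component of $\underline{b}$ as a uniformly bounded function on $\overline{\Omega}$ multiplied by $\sqrt{yx^2}$ or $\sqrt{y^2}$, apply Cauchy--Schwarz in the weighted inner product, and then use Young's inequality with a small parameter $\varepsilon$ to obtain $|(\underline{b}v,\nabla v)|\leq \varepsilon|v|_{1,\hat{\omega}}^2+C_\varepsilon\|v\|_0^2$. Since all terms in $c$ apart from $\beta$ are uniformly bounded on $\overline{\Omega}$, I would fix $\beta$ large enough that $c\geq C_\varepsilon+\eta$ for some $\eta>0$; choosing $\varepsilon<\alpha/2$ then yields
\begin{equation*}
\textbf{B}(v,v;t)\;\geq\;\tfrac{\alpha}{2}|v|_{1,\hat{\omega}}^2+\eta\|v\|_0^2\;\geq\;C\|v\|_{1,\hat{\omega}}^2.
\end{equation*}

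Continuity is then routine. The entries of $A$ coincide exactly with the weights, so Cauchy--Schwarz in $(\cdot,\cdot)_{\hat\omega}$ gives $|(A\nabla v,\nabla w)|\leq C|v|_{1,\hat{\omega}}|w|_{1,\hat{\omega}}$, while the convection and reaction pieces are handled by Cauchy--Schwarz in $L^2$ after bounding the coefficients on $\overline{\Omega}$. With coercivity and continuity in hand (both uniform in $t$ because $A$, $\underline{b}$ and $c$ are time-independent), I would apply the standard Lions--Lax--Milgram theorem for nonstationary variational problems to produce a unique $u\in L^2(0,T;H_{0,\hat{\omega}}^1(\Omega))$ with $\partial_t u$ in the corresponding dual, matching the payoff data \eqref{TC}.

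The main obstacle I expect is the convection estimate: the components $\underline{b}_1=rx-\tfrac34\rho\xi y^{1/2}x-yx$ and $\underline{b}_2=\mu y-\tfrac12\rho\xi y^{3/2}-\xi^2 y$ must be factored as a function that is bounded on $\overline\Omega$ times $\sqrt{yx^2}$ or $\sqrt{y^2}$, so that Young's inequality produces precisely $|v|_{1,\hat{\omega}}^2$ and $\|v\|_0^2$ rather than stray weights. This matching is delicate near the degenerate boundary $x=0$ and relies crucially on the strict lower bound $y\geq\zeta>0$ to keep factors like $y^{-1/2}$ under control; it is exactly this care that motivates the weighted Sobolev setting of Problem \ref{weakform}.
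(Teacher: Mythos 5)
Your sketch is sound, but note that the paper itself contains no proof of this theorem: it is quoted verbatim from Huang, Hung and Wang \cite{SWHH2}, and the authors only use its conclusion (coercivity) later, in the maximum-principle argument. So there is no in-paper argument to compare against line by line; what can be said is that your reconstruction follows the standard route of that reference. The core step is exactly right: factoring the weights out of $\nabla v^{T}A\nabla v$ reduces it to the constant-coefficient form $\tfrac12 P^{2}+\rho\xi PQ+\tfrac12\xi^{2}Q^{2}$ with $P=\sqrt{y}\,x v_{x}$, $Q=y v_{y}$, whose positive definiteness for $\rho\in[0,1)$ gives $(A\nabla v,\nabla v)\geq\alpha|v|_{1,\hat{\omega}}^{2}$ (a small correction: $\alpha$ depends on $\xi$ as well as $\rho$, since the smallest eigenvalue behaves like $\xi^{2}(1-\rho^{2})/(2(1+\xi^{2}))$). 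Your treatment of the convection term by weight-matching plus Cauchy--Schwarz and Young is valid and correctly identifies where $y\geq\zeta>0$ is needed (to control the factor $y^{-1/2}$ multiplying $\sqrt{y}\,x v_{x}$ in the $b_{1}$ contribution); the alternative used in this circle of papers is to integrate $(\underline{b}v,\nabla v)=\tfrac12\int\underline{b}\cdot\nabla(v^{2})$ by parts, which converts it into $-\tfrac12((\nabla\cdot\underline{b})v,v)$ because the boundary contributions vanish ($v=0$ on $\partial\Omega_{D}$ and $b_{1}$ carries a factor $x$ at the degenerate edge $x=0$). Both routes end by absorbing the resulting $\|v\|_{0}^{2}$ term into $(cv,v)$ using the free parameter $\beta$, which is consistent with the paper's remark that $\beta$ may be taken arbitrarily large. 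The continuity estimate and the appeal to the Lions generalization of Lax--Milgram for the evolution problem are standard and adequate as stated. I see no genuine gap.
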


We are now in position to formulate the following theorem.
\begin{thm}
Let $u(x,y,t) \in H_{0,\hat{\omega}}^{1}(\Omega)$ be the solution of \eqref{HWPDE3},\eqref{TC},\eqref{BC}. If $u_T(x,y) \geq 0$ and $g(x,y,t) \geq 0$ then $u(x,y,t) \geq 0$ a.e. in $Q_T:=\Omega \times (0,T], \; T>0$. 
\end{thm}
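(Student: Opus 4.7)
The strategy is to test the variational formulation of Problem~\ref{weakform} with $v = u^-(t)$, the negative part of the solution itself, and then exploit the coercivity of $\mathbf{B}$ together with the sign of $g$ to conclude that $\|u^-(t)\|_0$ is nonincreasing in $t$; since the initial datum $u_T$ is nonnegative, this will force $u^- \equiv 0$, i.e.\ $u \geq 0$.

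First I would verify that $v = u^-(t) := \max(-u(t),0)$ is an admissible test function. Stampacchia's truncation lemma gives $u^-(t) \in H^1_{\hat{\omega}}(\Omega)$ with $\nabla u^- = -\nabla u\,\chi_{\{u<0\}}$ in the weak sense, and $u|_{\partial\Omega_D} = 0$ yields $u^-|_{\partial\Omega_D} = 0$, so $u^-(t) \in H^1_{0,\hat{\omega}}(\Omega)$. Plugging $v = u^-(t)$ into the variational identity and splitting the integrals over $\{u<0\}$ (where $u = -u^-$ and $\nabla u = -\nabla u^-$) and $\{u \geq 0\}$ (where $u^-$ and $\nabla u^-$ both vanish) produces the algebraic identity $\mathbf{B}(u,u^-;t) = -\mathbf{B}(u^-,u^-;t)$. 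Combined with the chain-rule identity $(\partial_t u, u^-) = -\tfrac{1}{2}\tfrac{d}{dt}\|u^-\|_0^2$, Problem~\ref{weakform} reduces to
\[
\tfrac{1}{2}\tfrac{d}{dt}\|u^-(t)\|_0^2 + \mathbf{B}(u^-(t),u^-(t);t) = -(g(t),u^-(t)) \leq 0,
\]
the last inequality holding because $g \geq 0$ and $u^- \geq 0$. Coercivity $\mathbf{B}(u^-,u^-;t) \geq C\|u^-\|_{1,\hat{\omega}}^2 \geq 0$ then gives $\tfrac{d}{dt}\|u^-(t)\|_0^2 \leq 0$, and the hypothesis $u_T \geq 0$ forces $u^-(0) \equiv 0$, so $\|u^-(t)\|_0 \equiv 0$ for all $t$, whence $u \geq 0$ a.e.\ in $Q_T$.

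The step I expect to need the most care is justifying the chain-rule identity for $\partial_t u$ applied to $u^-$ in the weighted Sobolev setting: this is standard in the classical $H^1_0/H^{-1}$ Gelfand triple (Lions--Magenes, via mollification in time and passage to the limit), but to invoke it one needs $u \in L^2(0,T; H^1_{0,\hat{\omega}})$ with $\partial_t u \in L^2(0,T;(H^1_{0,\hat{\omega}})^*)$, a regularity class built into the well-posedness result cited above. Because the weight depends only on $(x,y)$ and not on $t$, the weighted version of the mollification argument goes through unchanged, so no genuinely new difficulty arises in passing from the unweighted to the weighted setting.
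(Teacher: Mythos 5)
Your proposal is correct and follows essentially the same route as the paper: test the weak formulation with the negative part of $u$, kill the cross term via the a.e.\ disjointness of the supports of $u^+$, $u^-$ and their gradients, and combine coercivity of $\mathbf{B}$ with the signs of $g$ and $u_T$ to force $u^- \equiv 0$. The only differences are cosmetic — you use the opposite sign convention for $u^-$ and the differential (Lions--Magenes chain rule) form of the energy identity, where the paper integrates over $Q_t$ and justifies the test function via Steklov averages — so no substantive comparison is needed.
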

\begin{proof}
For a function $u(x,y,t) \in H_{0,\hat{\omega}}^{1}(\Omega)$ we denote the positive and negative parts of $u$ respectively by $u^{+}$ and $u^{-}$, i.e. $u=u^{+}+u^{-}$, $u^{+} \geq 0$ and $u^{-} \leq 0$. Introducing 
\begin{equation*}
Du^+=\left\{
       \begin{array}{ll}
         Du, & \; \mbox{if} \; u > 0, \\
         0, & \; \mbox{if} \; u \leq 0,
       \end{array}
     \right.
Du^-=\left\{
       \begin{array}{ll}
         Du, & \; \mbox{if} \; u < 0, \\
         0, & \; \mbox{if} \; u \geq 0,
       \end{array}
     \right.
\end{equation*}
where $D$ denotes derivative in classical sense we get for any indices $i,j$ (cf. Section 7.4 in Gilbarg and Trudinger \cite{GT})
\begin{equation*}
u^+u^-=D_iu^+D_ju^-=D_iu^+u^-=u^+D_iu^-=0 \;\; \mbox{a.e. in} \;\; \Omega.
\end{equation*}
Further, we consider the strong variational form of \eqref{HWPDE2} in $Q_t$
\begin{equation*}
\int_{Q_t}\int \left( \frac{\partial u}{\partial t}-\nabla \cdot (k(u)) + cu \right) v d\Omega dt = \int_{Q_t}\int gv d\Omega dt.
\end{equation*}
Therefore we have
\begin{eqnarray}\label{mpeq1}
\begin{split}
& \int_{\Omega} uv d\Omega - \int_{\Omega} u(x,0)v(x,0) d\Omega - \int_{Q_T}\int u\frac{\partial v}{\partial t} d\Omega dt - \int_{Q_T}\int gv d\Omega dt \\
& + \int_{Q_T}\int (A\nabla u + \underline{b}u) \cdot \nabla v + cuv d \Omega dt=\int_{0}^{t}\int_{\partial \Omega} (A\nabla u + \underline{b}u) v \cdot \vec{\textbf{n}} d \sigma dt.
\end{split}
\end{eqnarray}
Using Steklov average and passing to the limit \cite{LS} we formally take $v=-u^{-} \geq 0$ in \eqref{mpeq1} to obtain
\begin{eqnarray*}
\begin{split}
-\frac{1}{2}\int_{\Omega} (u^-(x,t))^2 d \Omega + \frac{1}{2}\int_{\Omega} (u^-(x,0))^2 d \Omega - \int_0^t B(u^-,u^-;t) dt \\ 
= \int_{0}^{t}\int_{\partial \Omega} (A\nabla u + \underline{b}u) u^- \cdot \vec{\textbf{n}} d s dt - \int_{Q_T}\int gu^- d\Omega dt.
\end{split}
\end{eqnarray*}
Since $u_T(x,y) \geq 0$, $g(x,y,t) \geq 0$ we have $u^-(x,y,0)=u^-(x,y,t)\left|_{\partial \Omega} \right. \equiv 0$ and
\begin{equation*}
-\frac{1}{2}\int_{\Omega} (u^-(x,t))^2 d \Omega - \int_0^t B(u^-,u^-;t) dt = - \int_{Q_T}\int gu^- d\Omega dt \geq 0.
\end{equation*}
Following the coercivity of the bilinear form $\textbf{B}(\cdot,\cdot;t)$ we arrive at
\begin{equation}\label{mpeq2}
\frac{1}{2}\int_{\Omega} (u^-(x,t))^2 d \Omega + C \int_0^t \|u^{-}\|_{1,\hat{\omega}}^2 dt \leq 0.
\end{equation}
Finally, \eqref{mpeq2} implies $\int_{\Omega} (u^-(x,t))^2 d \Omega$ and therefore $u^-(x,y,t) \equiv 0$. We conclude that $u(x,y,t) \geq 0$ for a.e. $t \in (0,T)$. $\Box$
\end{proof}


\subsection{Pay-off and boundary conditions}

We now discuss in details the pay-off and boundary conditions \eqref{TC},\eqref{BC}, which are, in general, determined by the nature of the option (only call option is considered for brevity). Three typical choices of pay-off functions are considered \cite{WHD} and they are independent of $y$.

The ramp pay-off, corresponding to the vanilla option, is given by
\begin{equation}\label{FinalC1}
u_T(x,y)=\max(0,x-E), \; (x,y) \in \bar{I}_x \times \bar{I}_y,
\end{equation}
where $E<X$ denotes the exercise price of the option, $I_x=(0,X)$ and $I_y=(0,Y)$. The second choice is the cash-or-nothing (digital) pay-off, given by
\begin{equation}\label{FinalC2}
u_T(x,y)=B H(0,x-E), \; (x,y) \in \bar{I}_x \times \bar{I}_y,
\end{equation}
where $B>0$ is a constant and $H$ denotes the Heaviside function. The bullish vertical spread pay-off is defined by
\begin{equation}\label{FinalC3}
u_T(x,y)=\max(0,x-E_1)-\max(0,x-E_2), \; (x,y) \in \bar{I}_x \times \bar{I}_y,
\end{equation}
where $E_1$ and $E_2$ are two exercise prices, satisfying $E_1<E_2$. This represents a portfolio of buying one call option with exercise price $E_1$ and issuing one call option with the same expiry date but a larger exercise price, $E_2$. 

The boundary conditions at $x=0$ and $x=X$ are simply taken to be the extension of the pay-off condition, i.e.
\begin{equation}\label{BCondx}
u_D(0,y,t)=u_T(0,y)=0 \; \mbox{and} \; u_D(X,y,t)=u_T(X,y).
\end{equation}
The boundary condition at $y=\zeta$ ($y=Y$) is the numerical solution of the standard one-dimensional Black-Scholes equation for $\xi=\mu=0$ and the particular value $\sigma=\sqrt{\zeta}$ ($\sigma=\sqrt{Y}$), computed by the algorithm in \cite{SW}.

\section{The splitting method}\label{LOD}


Dimensional time-splitting methods are known for their efficiency, \emph{decreasing significantly the computational costs} when solving multidimensional problems \cite{HundVer,Sam}. Splitting schemes are, in general, categorized in locally one-dimensional (LOD) \cite{Yanenko} methods and alternating directions implicit (ADI) \cite{PR} methods. Predictor-corrector schemes of Douglas type are also considered as ADI schemes although the corrector steps are effectively one-dimensional. The construction of ADI schemes can be regarded as discretization of the PDE, factorization of the discrete equation and splitting of the factored discrete equation. The LOD methods, on the other hand, are based on the method of fractional steps, i.e. splitting of the differential equation, and further discretization of the resulting one-dimensional equations. 

\subsection{Splitting the differential equation}

We follow the method of fractional steps and construct the LOD-BE semi-discrete scheme that can be interpreted as dimensional Rothe method. The equation \eqref{HWPDE2} is further rewritten in the following conservative form
\begin{eqnarray}\label{divPDE}
\begin{split}
& \frac{\partial u}{\partial t} \underbrace{- \frac{\partial}{\partial x}\left(a_{11}\frac{\partial u}{\partial x} + \left( b_1 - \frac{\partial a_{12}}{\partial y} \right)u \right) + c_1 u}_{\mathcal{L}_1 u} \\
& \underbrace{- \frac{\partial}{\partial y}\left(a_{22}\frac{\partial u}{\partial y} + \left( b_2 + \frac{\partial a_{21}}{\partial x} \right)u \right) + c_2 u}_{\mathcal{L}_2 u} - \underbrace{ \frac{\partial}{\partial y}\left((a_{12}+a_{21})\frac{\partial u}{\partial x} \right)}_{\mathcal{L}_3 u}=g_1 + g_2,
\end{split}
\end{eqnarray}
where $a_{11}$, $a_{22}$, $a_{12}=a_{21}$ and $b_1$, $b_2$ are as given in \eqref{divformAc}, $c_1+c_2=c$ and $g_1+g_2=g$. Our flux-based finite volume spatial discretization benefits from the following representation for $k(x,y)=\rho \xi x y^{3/2}$
\begin{equation*}
\frac{\partial u}{\partial t} - \frac{\partial}{\partial x}\left(x w_1(x,y,u) \right) + c_1u
- \frac{\partial}{\partial y}\left(y w_2(y,u) \right) + c_2u - \frac{\partial}{\partial y}\left( k(x,y)\frac{\partial u}{\partial x} \right) = g,
\end{equation*}
where we introduced the weighted flux notation in direction $x$
\begin{equation}\label{fluxx}
w_1(x,y,u)=\frac{1}{2}xy \frac{\partial u}{\partial x} + \left(r-y-\frac{3}{2}\rho \xi y^{1/2}\right) u =: p_1(y)x \frac{\partial u}{\partial x} + q_1(y)u
\end{equation}
and the weighted flux in direction $y$
\begin{equation}\label{fluxy}
w_2(y,u)=\frac{1}{2}\xi^2 y \frac{\partial u}{\partial y} + \left( \mu-\xi^2 \right)u=:p_2y\frac{\partial u}{\partial y} + q_2u.
\end{equation}
For the reaction terms we have 
\begin{equation}
c_1(y)=\frac{3}{2}r-y-\frac{3}{2}\rho \xi y^{1/2}+\frac{1}{2}\beta, \;\; c_2=\frac{1}{2}r + \mu-\xi^2+\frac{1}{2}\beta .
\end{equation}

We introduce a uniform partition of $[0,T] \left\{ {t_k = k\tau ,k = 0,1,\ldots,K,\tau = \frac{T}{K}} \right\}$ and consider the following fractional steps scheme
\begin{eqnarray*}
u_{(1)} \left\{
\begin{array}{llll}
& \frac{\partial u_{(1)} }{\partial t} + \mathcal{L}_1 u_{(1)}=g_1,\quad t_k < t \leq t_{k+1}, \\
& u_{(1)} (x,y,0) = u_T(x), \; (x,y) \in [0,X] \times [\zeta,Y], \\
& u_{(1)} (0,y,t) = u_D(0,y,t), \; (y,t) \in [\zeta,Y] \times (0,T], \\
& u_{(1)} (X,y,t) = u_D(X,y,t), \; (y,t) \in [\zeta,Y] \times (0,T],
\end{array}
\right.
\end{eqnarray*}
\begin{eqnarray*}
u_{(2)} \left\{
\begin{array}{llll}
& \frac{\partial u_{(2)} }{\partial t} + \mathcal{L}_2 u_{(2)}=\mathcal{L}_3 u_{(1)} + g_2,\quad t_{k} < t \leq t_{k+1},k = 1,2,\ldots,K, \\
& u_{(2)} (x,y,t_{k}) = u_{(1)} (x,y,t_{k + 1}), \; (x,y) \in [0,X] \times [\zeta,Y], \\
& u_{(2)} (x,\zeta,t) = u_D(x,\zeta,t), \; (x,t) \in [0,X] \times (0,T], \\
& u_{(2)} (x,Y,t) = u_D(x,Y,t), \; (x,t) \in [0,X] \times (0,T].
\end{array}
\right.
\end{eqnarray*}

The construction of our numerical scheme demands that \emph{we begin by performing the time discretization}. We obtain semi-discrete approximations $u^{k}(x,y)$ to the solution $u(x,y,t)$ of \eqref{HWPDE}-\eqref{BC} at $t=t_k=k\tau$ by the backward Euler time stepping
\begin{eqnarray}
& (I + \tau \mathcal{L}_1)u^{k+1/2} = u^{k} + \tau g_1, \label{FirstSubProb}     \\
& u^0=u_T(x,y), \label{FirstSubProbIC}\\
& u^{k+1/2}(0,y)=u_D(0,y,t_{k+1}), \; u^{k+1/2}(X,y)=u_D(X,y,t_{k+1}), \label{FirstSubProbBC}   \\
& (I + \tau \mathcal{L}_2)u^{k+1} = (I+\tau \mathcal{L}_3)u^{k+1/2} + \tau g_2, \label{SecondSubProb}    \\
& u^{k+1}(x,\zeta)=u_D(x,\zeta,t_{k+1}), \; u^{k+1}(x,Y)=u_D(x,Y,t_{k+1}).\label{SecondSubProbBC}
\end{eqnarray}
We stress on the time stepping in \eqref{SecondSubProb} that may be regarded as implicit-explicit (IMEX) with respect to the intermediate semi-discrete solution $u^{k+1/2}(x,y)$.

\subsection{Analysis of time semi-discretization}

Prior to the next considerations we have to introduce the following weighted Sobolev space 
\begin{equation*}
H_{w}^{1}(0,X)=\left\{ v:v \in L^2(0,X), \nabla v \in L_{w}^2(0,X) \right\},
\end{equation*}
taking into account the degeneracy of the one-dimensional Black-Scholes equation at $x=0$ by the weighted $L^2$-norm
\begin{equation*}
\|v\|_{0,w}:=\sqrt{(v,v)_{w}}=\left( \int_{0}^X x^2 v^2 dx \right)^{1/2},
\end{equation*}
with the energy norm, defined by $\|v\|_{1,w}^2=|v|_{1,w}^2 + \|v\|_0^2$ for any $v \in H_{w}^{1}(0,X)$, where $|v|_{1,w}^2=\|\nabla v\|_{0,w}^2$. We also introduce the following subspace of $H_{w}^{1}(0,X)$
\begin{equation*}
H_{0,w}^{1}(0,X)=\left\{ v:v \in H_{w}^{1}(0,X) \; \mbox{and} \; v(0)=v(X)= 0 \right\}.
\end{equation*}

\begin{lem} Let the operator $(I+\tau \mathcal{L}_1)^{-1}$ be such that $(I+\tau \mathcal{L}_1)^{-1}u$ is the solution $v$ of
\[(I+\tau \mathcal{L}_1)v=u, v(0,y)=0, v(X,y)=0
\]
and analogously for $(I+\tau \mathcal{L}_2)^{-1}$. Then $I+\tau \mathcal{L}_1$ and $I+\tau \mathcal{L}_2$ are inverse positive and satisfy the conditions
\begin{equation}\label{eq:NormEst}
\left\|(I+\tau \mathcal{L}_1)^{-1}\right\|_{L^2(\Omega)}\leq \frac{1}{1+\tilde{C}_1\tau}, \left\|(I+\tau \mathcal{L}_2)^{-1}\right\|_{L^2(\Omega)}\leq \frac{1}{1+\tilde{C}_2\tau},
\end{equation}
where $\tilde{C}_1$, $\tilde{C}_2$ are positive constants independent of $\tau$. 

Moreover, the following estimate holds
\begin{equation}
\left\|(I+\tau \mathcal{L}_2)^{-1}(I+\tau \mathcal{L}_3)(I+\tau \mathcal{L}_1)^{-1}\right\|_{L^2(\Omega)}\leq \frac{1}{1+2\sqrt{\tilde{C}_1 \tilde{C}_2}\tau}.
\label{eq:Est2}
\end{equation}
\end{lem}
\begin{proof} Let $\mathcal{A}:=I+\tau \mathcal{L}_1$. Let us recall the notation for the flux \eqref{fluxx}, suppressing the dependence of the coefficients on $y$. Applying integration by parts one obtains
\begin{equation}\label{eq:Formula23}
	\begin{aligned}
		\left(\mathcal{A}u^{k+1/2},u^{k+1/2}\right)_{L^2(\Omega)}&=\tau\int_{\Omega}x\left(p_1 x\frac{\partial u^{k+1/2}}{\partial x}+q_1 u^{k+1/2}\right)\frac{\partial u^{k+1/2}}{\partial x} dxdy\\
		&+(1+\tau c_1)\left\|u^{k+1/2}\right\|_{L^2(\Omega)}=\int_{\Omega}u^{k}u^{k+1/2} dxdy,
	\end{aligned}
\end{equation}
and then 
\begin{equation}\label{eq:Estimate1}
	\begin{aligned}	
		\left(1+\tau \left(c_1-\frac{1}{2}q_1\right)\right)\left\|u^{k+1/2}\right\|^2_{L^2(\Omega)}&\leq\frac{1}{2}\tau\int_{\Omega}x^2y\left(\frac{\partial u^{k+1/2}}{\partial x}\right)^2 dxdy\\
		&+\left(1+\tau \left(c_1-\frac{1}{2}q_1\right)\right)\left\|u^{k+1/2}\right\|^2_{L^2(\Omega)}\\
		&\leq \left\|u^{k}\right\|_{L^2(\Omega)}\left\|u^{k+1/2}\right\|_{L^2(\Omega)}.
	\end{aligned}
\end{equation}
Choose $\beta$ such that 
\[c_1-\frac{1}{2}q_1=r-\frac12y-\frac34\rho\xi y^{1/2}+\frac12\beta\geq \tilde{C}_1>0.\] Then 
the first estimate in \eqref{eq:NormEst} holds and the second is derived analogously. 

Denote $\mathcal{B}=I+\tau \mathcal{L}_2$, and let $u^{k+1}$ be the solution of $\mathcal{B}u^{k+1}=(I+\tau \mathcal{L}_3)u^{k+1/2}$ where $u^{k+1/2}$ is the corresponding solution of $\mathcal{A}u^{k+1/2}=(I+\tau \mathcal{L}_1)u^{k+1/2}=u^k$. Obviously,
\[u^{k+1}=(I+\tau \mathcal{L}_2)^{-1}(I+\tau \mathcal{L}_3)(I+\tau \mathcal{L}_1)^{-1}u^k.
\]
Again, integrating by parts, we have
\begin{eqnarray*}
\begin{split}
\left(\mathcal{B}u^{k+1},u^{k+1}\right)_{L^2(\Omega)}&=\frac{1}{2}\tau\xi^2\int_{\Omega}y^2\left(\frac{\partial u^{k+1}}{\partial y}\right)^2 dxdy \\
&+\left(1+\tau \left(c_2-\frac{1}{2}q_2\right)\right)\left\|u^{k+1}\right\|^2_{L^2(\Omega)}
\end{split}
\end{eqnarray*}
and also
\begin{eqnarray*}
\begin{split}
\left((I+\tau \mathcal{L}_3)u^{k+1/2},u^{k+1}\right)_{L^2(\Omega)}&=\left(u^{k+1/2},u^{k+1}\right)_{L^2(\Omega)} \\
&-\tau\rho\xi\int_{\Omega}xy^{3/2}\frac{\partial u^{k+1}}{\partial y} \frac{\partial u^{k+1/2}}{\partial x}dxdy.
\end{split}
\end{eqnarray*}
Then the right-hand sides coincide implying that
\begin{eqnarray}\label{eq:Estimate2}
\begin{split}
&\frac{1}{2}\tau\int_{\Omega}\left(y\xi\frac{\partial u^{k+1}}{\partial y}+\rho xy^{1/2} \frac{\partial u^{k+1/2}}{\partial x}\right)^2 dxdy \\
&+\left(1+\tau \left(c_2-\frac{1}{2}q_2\right)\right)\left\|u^{k+1}\right\|^2_{L^2(\Omega)}\\
&\qquad=\left(u^{k+1/2},u^{k+1}\right)_{L^2(\Omega)}+\frac{1}{2}\tau\rho^2\int_{\Omega}x^{2}y\left(\frac{\partial u^{k+1/2}}{\partial x}\right)^2dxdy.
\end{split}
\end{eqnarray}
We use the second inequality in \eqref{eq:Estimate1} and obtain
\begin{equation}
\begin{aligned}
&\left(1+\tau \tilde{C}_2\right)\left\|u^{k+1}\right\|^2_{L^2(\Omega)}\leq\left\|u^{k+1/2}\right\|_{L^2(\Omega)}\left\|u^{k+1}\right\|_{L^2(\Omega)}\\
&\quad+\rho^2\left(\left\|u^{k+1/2}\right\|_{L^2(\Omega)}\left\|u^{k}\right\|_{L^2(\Omega)}-\left(1+\tau \tilde{C}_1\right)\left\|u^{k+1/2}\right\|^2_{L^2(\Omega)}\right)\\
&\quad\leq\left\|u^{k+1/2}\right\|_{L^2(\Omega)}\left(\left\|u^{k+1}\right\|_{L^2(\Omega)}+\left\|u^{k}\right\|_{L^2(\Omega)}-\left(1+\tau \tilde{C}_1\right)\left\|u^{k+1/2}\right\|_{L^2(\Omega)}\right).
\end{aligned}
\label{eq:Estimate3}
\end{equation}
Since the last expression is quadratic with respect $\left\|u^{k+1/2}\right\|_{L^2(\Omega)}$ we get 
\[\left(1+\tau \tilde{C}_2\right)\left\|u^{k+1}\right\|^2_{L^2(\Omega)}\leq \frac{\left(\left\|u^{k+1}\right\|_{L^2(\Omega)}+\left\|u^{k}\right\|_{L^2(\Omega)}\right)^2}{4\left(1+\tau \tilde{C}_1\right)}
\]
and further we obtain
\[4\left(1+\tau \tilde{C}_1\right)\left(1+\tau \tilde{C}_2\right)\left\|u^{k+1}\right\|^2_{L^2(\Omega)}\leq \left(\left\|u^{k+1}\right\|_{L^2(\Omega)}+\left\|u^{k}\right\|_{L^2(\Omega)}\right)^2
\]
which together with $\left(1+\tau \tilde{C}_1\right)\left(1+\tau \tilde{C}_2\right)\geq \left(1+\tau\sqrt{\tilde{C}_1\tilde{C}_2}\right)^2$ implies
\[2\left(1+\tau\sqrt{\tilde{C}_1\tilde{C}_2}\right)\left\|u^{k+1}\right\|_{L^2(\Omega)}\leq\left\|u^{k+1}\right\|_{L^2(\Omega)}+\left\|u^{k}\right\|_{L^2(\Omega)}.
\]
Hence 
\[\left(1+2\tau\sqrt{\tilde{C}_1\tilde{C}_2}\right)\left\|u^{k+1}\right\|_{L^2(\Omega)}\leq\left\|u^{k}\right\|_{L^2(\Omega)}
\]
and then \eqref{eq:Est2} holds. $\Box$
\end{proof}

Further, the consistency of the semi-discretization is investigated. Following the considerations of Clavero et al. \cite{CJL} we define the local error $\phi_{n+1}$ by
\begin{equation*}
\phi_{n+1}=u(x,y,t_{k+1})-\acute{u}^{k+1}(x,y),
\end{equation*}
where $\acute{u}^{k+1}$ is the result '$u^{k+1}$' of applying the semi-discrete scheme with $u^k=u(t_k)$. We then have the next two results, Lemma \ref{leest} and Theorem \ref{tempconv}, under the assumption that the pay-off and the r.h.s. $g$ are sufficiently smooth and compatible for the solution $u$ to have generalized spatial derivatives up to order four and the derivatives w.r.t. $t$ are smooth up to order two.
\begin{lem} \label{leest}
The temporal discretization  \eqref{FirstSubProb}-\eqref{SecondSubProbBC} yields
\begin{equation}\label{lte}
\left\| \phi_{k+1} \right\|_{L^2(\Omega)} \leq C \tau^2,
\end{equation}
where $C$ is a constant, independent of $\tau$.
\end{lem}
\begin{proof} From equations \eqref{FirstSubProb},\eqref{SecondSubProb} we have that $\acute{u}^{k+1}$ satisfies the equation
\begin{eqnarray}\label{acuteu1}
\begin{split}
\acute{u}^{k+1} & = (I+\tau \mathcal{L}_2)^{-1}(I+\tau \mathcal{L}_3)(I+\tau \mathcal{L}_1)^{-1} u(t_k) \\
& + \tau(I+\tau \mathcal{L}_2)^{-1}(I+\tau \mathcal{L}_3)(I+\tau \mathcal{L}_1)^{-1}g_1 + \tau (I+\tau \mathcal{L}_2)^{-1}g_2.
\end{split}
\end{eqnarray}
Further, under some regularity assumptions on $v$, one observes that 
\begin{equation}\label{invop}
(I+\tau \mathcal{L}_1)^{-1}v=(I-\tau \mathcal{L}_1)v + O(\tau^2), \; (I+\tau \mathcal{L}_2)^{-1}v=(I-\tau \mathcal{L}_2)v + O(\tau^2).
\end{equation}
Therefore we derive from \eqref{acuteu1}
\begin{eqnarray*}
\begin{split}
\acute{u}^{k+1} & = (I-\tau \mathcal{L}_2)(I+\tau \mathcal{L}_3)(I-\tau \mathcal{L}_1)u(t_k) \\
& + \tau(I-\tau \mathcal{L}_2)(I+\tau \mathcal{L}_3)(I-\tau \mathcal{L}_1)g_1 + \tau (I-\tau \mathcal{L}_2)g_2 + O(\tau^2).
\end{split}
\end{eqnarray*}
and by executing the multiplication we obtain
\begin{equation}\label{acuteu2}
\begin{split}
\acute{u}^{k+1} & = u(t_k)-\tau(\mathcal{L}_1+\mathcal{L}_2-\mathcal{L}_3)u(t_k) + \tau (g_1+g_2) + O(\tau^2).
\end{split}
\end{equation}
On the other hand, by Taylor series expansion and using \eqref{divPDE}, we get
\begin{eqnarray*}
u(t_{k+1})=u(t_k)-\tau(\mathcal{L}_1+\mathcal{L}_2-\mathcal{L}_3)u(t_k)+\tau(g_1+g_2) + \int_{t_k}^{t^{k+1}}(t_{k+1}-s)\frac{\partial^2 u}{\partial t^2}ds
\end{eqnarray*}
and after subtracting it from \eqref{acuteu2} one derives $\phi_{k+1}=O(\tau^2)$. $\Box$
\end{proof}

We define the global error for the semi-discretization process in the form
\begin{equation*}
\Phi_{\tau}=\sup_{k \leq \frac{T}{\tau}} \left\| u(t_k)-u^{k} \right\|_{L^2(\Omega)}.
\end{equation*}
\begin{thm} \label{tempconv}
The temporal discretization \eqref{FirstSubProb}-\eqref{SecondSubProbBC} is first-order convergent
\begin{equation*}
\Phi_{\tau} \leq C \tau,
\end{equation*}
where $C$ is a constant, independent of $\tau$.
\end{thm}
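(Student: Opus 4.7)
The plan is the standard Lax-equivalence style argument: combine the $O(\tau^2)$ local truncation error with the $L^2$-stability of $(I+\tau L_1)^{-1}$ and $(I+\tau L_2)^{-1}$ to telescope the global error, using that the number of steps is $K=T/\tau$ so that one power of $\tau$ survives.

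First I would decompose the global error at step $k+1$ as
\begin{equation*}
u(t_{k+1})-u^{k+1} = \bigl(u(t_{k+1})-\acute{u}^{k+1}\bigr) + \bigl(\acute{u}^{k+1}-u^{k+1}\bigr) = e_{k+1} + \bigl(\acute{u}^{k+1}-u^{k+1}\bigr),
\end{equation*}
where $\acute{u}^{k+1}$ is (as in the previous lemma) the output of one step of the scheme applied to the exact value $u(t_k)$ instead of $u^k$. Subtracting the two instances of the full two-stage scheme (with identical $g_1(t_{k+1})$, $g_2(t_{k+1})$), the forcing and the commutator terms cancel and I obtain the clean propagation identity
\begin{equation*}
(I+\tau L_1)(I+\tau L_2)\bigl(\acute{u}^{k+1}-u^{k+1}\bigr) = u(t_k)-u^k,
\end{equation*}
so that $\acute{u}^{k+1}-u^{k+1} = (I+\tau L_2)^{-1}(I+\tau L_1)^{-1}(u(t_k)-u^k)$.

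Taking $L^2(\Omega)$-norms and invoking the stability estimates \eqref{operest} from the preceding lemma together with the local error bound \eqref{lte} yields the one-step recurrence
\begin{equation*}
\bigl\| u(t_{k+1})-u^{k+1}\bigr\|_{L^2(\Omega)} \leq \frac{1}{(1+\tilde{C_1}\tau)(1+\tilde{C_2}\tau)}\,\bigl\| u(t_k)-u^k\bigr\|_{L^2(\Omega)} + C\tau^2.
\end{equation*}
Setting $\rho := 1/\bigl[(1+\tilde{C_1}\tau)(1+\tilde{C_2}\tau)\bigr]\le 1$ and iterating from $k=0$ (where the error vanishes because $u^0=u_T$), I get $\|u(t_{k+1})-u^{k+1}\|_{L^2(\Omega)} \leq C\tau^2 \sum_{j=0}^{k}\rho^j$. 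Two finishing bounds are available: either use $\rho\leq 1$ and $k+1\leq K = T/\tau$ to conclude $\sum_{j=0}^k \rho^j \leq T/\tau$, or use the geometric sum $\sum_{j=0}^{\infty}\rho^j = 1/(1-\rho) \leq C/\tau$ since $1-\rho \geq \tilde{C}\tau/(1+\tilde{C}\tau)$ for some $\tilde C>0$. Either way the factor $1/\tau$ absorbs one power and gives $E_\tau \leq CT\tau$, taking the supremum over $k$.

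The only slightly delicate point is the propagation identity: one has to check that when subtracting the scheme applied to $u(t_k)$ from the scheme applied to $u^k$ the inhomogeneous terms $\tau g_1(t_{k+1})$ and $\tau(I+\tau L_1)g_2(t_{k+1})$ cancel exactly, which is immediate because the scheme is linear and the source terms do not depend on the iterate. Everything else is a routine application of the previously established stability and consistency lemmas, so no genuine obstacle arises.
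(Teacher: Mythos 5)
Your proposal is correct and follows essentially the same route as the paper: the same triangle-inequality decomposition into local error plus propagated error, the same propagation identity $\acute{u}^{k}-u^{k} = (I+\tau L_2)^{-1}(I+\tau L_1)^{-1}(u(t_{k-1})-u^{k-1})$, and the same combination of the stability bounds \eqref{operest} with the local error bound \eqref{lte} followed by recursion. Your explicit summation of the recurrence (via $\rho\le 1$ and $K=T/\tau$, or the geometric series) merely fills in the step the paper compresses into ``by recurrence we obtain.''
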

\begin{proof}
The global error at the time $t_k$ can be decomposed in the form
\begin{equation}\label{errordecomp}
\left\| u(t_k)-u^k \right\|_{L^2(\Omega)} \leq \left\| u(t_k)-\acute{u}^k \right\|_{L^2(\Omega)} + \left\| \acute{u}^k-u^k \right\|_{L^2(\Omega)}.
\end{equation}
For the second term in the right-hand side we have 
\begin{equation*}
\acute{u}^k-u^k = (I+\tau \mathcal{L}_2)^{-1}(I+\tau \mathcal{L}_3)(I+\tau \mathcal{L}_1)^{-1}(u(t_{k-1})-u^{k-1}).
\end{equation*}
Further, by the estimates \eqref{eq:NormEst} and \eqref{eq:Est2}, it holds 
\begin{equation*}
\left\| u(t_k)-u^k \right\|_{L^2(\Omega)} \leq O(\tau^2) + \left\| u(t_{k-1})-u^{k-1} \right\|_{L^2(\Omega)}.
\end{equation*}
Finally, by recurrence we obtain
\begin{equation*}
\left\| u(t_k)-u^k \right\|_{L^2(\Omega)} \leq C \tau. \Box
\end{equation*}
\end{proof}

\section{Full Discretization}\label{FullDisc}

We now proceed to the derivation of the full discretization of problem \eqref{HWPDE}-\eqref{BC}. First, we present the spatial discretization of the equation \eqref{FirstSubProb} in direction $x$ where the main issue is the degeneracy at the boundary $x=0$. Standard centered-space finite difference approximation of the first derivative may introduce oscillations in the numerical solution since the problem is convection-dominated in the neighbourhood where the degeneration occurs.

\subsection{The finite volume scheme}

The exponentially fitted finite volume method of Wang \cite{SW} resolves the degeneracy as the local flux approximation is determined by a set of two-point boundary value problems (BVPs), defined on the element edges. We briefly describe the discussed method as we apply it to the first subproblem \eqref{FirstSubProb}-\eqref{FirstSubProbBC}. Recalling the notations used in the continuous flux \eqref{fluxx} we have
\begin{equation}\label{FirstDivForm}
\begin{split}
\dot{u}^{k+1/2} = \frac{\partial}{\partial x}\left(x\left( p_1(y)x\frac{\partial u^{k+1/2}}{\partial x} + q_1(y)u^{k+1/2} \right) \right) - c_1(y)u^{k+1/2} + g_1^{k+1},
\end{split}
\end{equation}
where $\dot{u}^{k+1/2}$ stands for the discretized temporal derivative. 

Let the interval $[0,X]$ be subdivided into $N$ intervals $I_{i} := (x_{i},x_{i+1}), \; i=0,\dots,N-1,$ with $0 =: x_{0} < x_{1} < \dots < x_{N}:=X$. For each $i=0,\dots,N-1$ we set $h_{i} := x_{i+1} - x_{i}$ and $h := \max_{i=0,\dots,N-1} h_{i}$. We also denote $x_{i+1/2} := x_{i}+h_{i}/2$ for $i=0,\dots,N-1$, $x_{-1/2}:=x_{0}=0$, $x_{N+1/2}:=x_{N}=X$. Analogous partition of $[\zeta,Y]$ is considered in direction $y$. 

Integrating equation \eqref{FirstDivForm} over the interval $\bar{I}_{i}:= [x_{i-1/2},x_{i+1/2}]$ and applying the mid-point quadrature rule we arrive at
\begin{eqnarray}\label{FirstSemiD}
\begin{split}
\dot{u}_i^{k+1/2} \hbar_i^x = x_{i+1/2} \left. w_1(u^{k+1/2}) \right|_{x_{i+1/2}} & - x_{i-1/2} \left. w_1(u^{k+1/2}) \right|_{x_{i-1/2}} \\
& - c_1(y) u^{k+1/2}_i \hbar_i^x + g_1^{k+1} \hbar_i^x,
\end{split}
\end{eqnarray}
where $\hbar_i^x = x_{i+1/2}- x_{i-1/2}$ and $y$ is fixed in these considerations. In order to obtain an approximation for the flux at the node $x_{i+1/2}$ we consider the following boundary value problem (BVP):
\begin{eqnarray*}
\left( {p_{1,{i+1/2}}(y)x{v}' + q_{1,{i+1/2}}(y)v} \right)^\prime = 0, \;\; x \in I_i, \\
v(x_i) = u_i, \; v(x_{i+1}) = u_{i+1}.
\end{eqnarray*}
The solution of that problem is the discrete flux
\begin{equation*}
w_{1,i+1/2}(u)=q_{1,i+1/2}(y)\frac{x_{i+1}^{\alpha_{1,i} (y)} u_{i+1} - x_i^{\alpha_{1,i} (y)} u_i}{x_{i+1}^{\alpha_{1,i} (y)} - x_i^{\alpha_{1,i} (y)}}, \;\;
\alpha_{1,i}(y) = \frac{q_{1,{i+1/2}}}{p_{1,{i+1/2}}}.
\end{equation*}
For the derivation of the discrete flux at $x_{1/2}$, taking into account the degeneracy, the BVP is considered with an extra degree of freedom
\begin{eqnarray*}
\left( {p_{1,1/2}(y)x{v}' + q_{1,1/2}(y)v} \right)^\prime = C, \;\; x \in I_0, \\
v(0) = u_0, \; v(x_1) = u_1,
\end{eqnarray*}
so that we obtain the following approximation:
$$w_{1,1/2}(u)=\frac{1}{2}\left[ \left( p_{1,1/2}(y) + q_{1,1/2}(y) \right)u_1 - \left( p_{1,1/2}(y) - q_{1,1/2}(y) \right)u_0 \right].$$



Dyakonov \cite{Dyakonov} first observed that the boundary conditions deteriorate the accuracy of LOD splitting methods if the discrete equations on the boundaries differ from the equations for the inner nodes of the mesh. The issue is further investigated in \cite{HundVer,Yanenko}. We implement boundary corrections for $j=0$ and $j=M$ so that by the flux approximations we obtain the fully-discrete problem for \eqref{FirstSubProb}
\begin{equation}\label{fulldiscpr1}
\textbf{E}_1 \bar{u}_{j} = f(y_j), \;\; j=0,\dots,N, \;\; \textbf{E}_1=\text{tridiag}\lbrace e_{i,i-1},e_{i,i},e_{i,i+1} \rbrace
\end{equation}
%
where the interior matrix elements for $i=2,\dots,M-1$ are ($p_1(y), \; q_1(y)$ and $\alpha_1(y)$ do not depend on $x$ so we omit the corresponding indexing for clarity)
\begin{eqnarray}\label{interiorelem}
\begin{split}
& e_{i,i-1}=-{\frac{x_{i - 1/2} q_1(y_j)x_{i - 1}^{\alpha_1 (y_j)} }{x_i^{\alpha_1 (y_j)} - x_{i - 1}^{\alpha_1 (y_j)} }}, \;
e_{i,i+1}=-{\frac{x_{i + 1/2} q_1(y_j)x_{i + 1}^{\alpha_1 (y_j)}}{x_{i + 1}^{\alpha_1 (y_j)} - x_i^{\alpha_1 (y_j )} }}, \\
& e_{i,i}=\frac{\hbar _i^x}{\tau} + \frac{x_{i + 1/2}q_1(y_j)x_i^{\alpha_1 (y_j)}}{x_{i + 1}^{\alpha_1 (y_j)} - x_i^{\alpha_1 (y_j)}} + \frac{x_{i - 1/2}q_1(y_j )x_i^{\alpha_1 (y_j )}} {x_i^{\alpha_1 (y_j)}- x_{i-1}^{\alpha_1 (y_j)}} + \hbar_i^x c_1(y_j), \\
\end{split}
\end{eqnarray}
For the discrete equation at $i=1$ we derive
\begin{eqnarray*}\label{modifiedelem}
\begin{split}
& e_{1,0}=-\frac{x_{1/2}}{2}\left(p_1(y_j) - q_1(y_j) \right), \;  
e_{1,2}=-{\frac{x_{3 / 2} q_1(y_j )x_2^{\alpha_1 (y_j )} }{x_2^{\alpha_1 (y_j )} -  x_1^{\alpha_1 (y_j )} }}, \\
& e_{1,1}={\frac{\hbar_1^x}{\tau} + \frac{x_{3/2}p_1(y_j)x_1^{\alpha_1 (y_j )}} {x_2^{\alpha_1 (y_j )} - x_1^{\alpha_1 (y_j )} } + \frac{x_{1 / 2} }{2}\left( {p_1(y_j ) + q_1(y_j )} \right) + \hbar_1^x c_1(y_j)} \\
\end{split}
\end{eqnarray*}
as the right-hand side for $i=1,\dots,M-1$ is
$$
f_{i}(y_j)=\frac{\hbar_i^x}{\tau}u_{i,j}^k+g_1(x_i,y_j,t^{k+1})\hbar_i^x. 
$$
The boundary conditions \eqref{FirstSubProbBC} for $i=0$ and $i=N$ correspond to
\begin{eqnarray*}\label{bcelem}
\begin{split}
& e_{0,0}=1, \; e_{0,1}=0, \; f_{0}(y_j)=u_D(0,y_j,t^{k+1}), \\
& e_{N,N-1}=0, \; e_{N,N}=1, \; f_N(y_j)=u_D(X,y_j,t^{k+1}).
\end{split}
\end{eqnarray*}
%
The discrete problem \eqref{fulldiscpr1} is a linear system for the discrete solution $\bar{u}_{i,j}, \; i=0,\dots,N,$ of the problem \eqref{FirstSubProb}-\eqref{FirstSubProbBC}, solved by the Thomas algorithm.

The spatial discretization of the implicit operator of problem \eqref{SecondSubProb},\eqref{SecondSubProbBC} is derived analogously by the introduction of the continuous flux in direction $y$ \eqref{fluxy}. The mixed derivative term in the r.h.s., however, should be considered in details. For the expression $\left. {\left( {k(x,y)\frac{\partial u}{\partial x}} \right)} \right|_{\left( {x_i ,y_{j - 1/2},t} \right)}^{\left( {x_i ,y_{j+ 1/2},t} \right)} $, $k(x,y) = \rho \xi x y^{3/2}$, we have the following approximation
\begin{eqnarray*}
\begin{split}
& \left. {\left( {k(x,y)\frac{\partial u}{\partial x}} \right)} \right|_{\left( {x_i,y_{j-1/2}} \right)}^{\left( {x_i,y_{j+1/2}} \right)} 
= k_{i,j+1/2} \left. {\frac{\partial u}{\partial x}} \right|_{\left( {x_i ,y_{j+1/2}} \right)} - k_{i,j-1/2} \left. {\frac{\partial u}{\partial x}} \right|_{\left({x_i ,y_{j-1/2}} \right)} \\ &
\approx \frac{k_{i,j+1/2}}{2} \left( {\left.
{\frac{\partial u}{\partial x}} \right|_{\left( {x_i ,y_{j+1}} \right)}+ \left. {\frac{\partial u}{\partial x}} \right|_{\left( {x_i,y_j}
\right)} } \right) - \frac{k_{i,j-1/2}}{2} \left({\left. {\frac{\partial u}{\partial x}} \right|_{\left( {x_i,y_j} \right)} + \left. {\frac{\partial u}{\partial x}} \right|_{\left( {x_i,y_{j-1}}\right)} } \right) \\ 
& \approx \frac{k_{i,j+1/2}}{4} \left( {\frac{u_{i + 1,j + 1} - u_{i,j + 1} + u_{i + 1,j} - u_{i,j} }{h_i^x } + \frac{u_{i,j + 1} - u_{i - 1,j + 1} +
u_{i,j} - u_{i - 1,j} }{h_{i - 1}^x }} \right) \\
& - \frac{k_{i,j-1/2}}{4}\left( \frac{u_{i + 1,j} - u_{i,j} + u_{i + 1,j - 1} - u_{i,j - 1} }{h_i^x } + \frac{u_{i,j} - u_{i - 1,j} + u_{i,j - 1} - u_{i - 1,j - 1} }{h_{i - 1}^x } \right).
\end{split}
\end{eqnarray*}
We stress that at $i=0$ and $i=N$ (the nodes where boundary corrections are applied) the discretization of the mixed derivative is changed respectively by forward and backward difference approximations in direction $x$ as follows
\begin{eqnarray*}
\begin{split}
 \left. {\left( {k(x,y)\frac{\partial u}{\partial x}} \right)} \right|_{\left( {0,y_{j-1/2}} \right)}^{\left( {0,y_{j+1/2}} \right)}
& \approx \frac{k_{0,j+1/2}}{2} \left( \frac{u_{1,j + 1} - u_{0,j + 1} + u_{1,j} - u_{0,j} }{h_0^x } \right) \\
& - \frac{k_{0,j-1/2}}{2} \left( \frac{u_{1,j} - u_{0,j} + u_{1,j - 1} - u_{0,j - 1} }{h_0^x } \right), \\
\end{split}
\end{eqnarray*}
\begin{eqnarray*}
\begin{split}
\left. {\left( {k(x,y)\frac{\partial u}{\partial x}} \right)} \right|_{\left( {X,y_{j-1/2}} \right)}^{\left( {X,y_{j+1/2}} \right)}
& \approx \frac{k_{N,j+1/2}}{2} \left( \frac{u_{N,j + 1} - u_{N-1,j + 1} + u_{N,j} - u_{N-1,j} }{h_{N-1}^x } \right) \\
& - \frac{k_{N,j-1/2}}{2} \left( \frac{u_{N,j} - u_{N-1,j} + u_{N,j - 1} - u_{N-1,j - 1} }{h_{N-1}^x } \right). \\
\end{split}
\end{eqnarray*}

The resulting linear system for the discrete solution $\hat{u}_{i,j}, \; j=0,\dots,N,$ of the problem \eqref{SecondSubProb},\eqref{SecondSubProbBC} is solved by the Thomas algorithm for each $i=0,\dots,N$.

\subsection{Discrete Maximum (Minimum) Principle}

In this subsection we discuss the solvability of the fully-discrete problems as we investigate the monotonicity of the system matrices $\textbf{E}_1$ and $\textbf{E}_2$, corresponding, respectively, to the intermediate solution $\bar{u}$ and the numerical solution on the new time level $\hat{u}$, and the discrete maximum principle. The construction of numerical schemes on compact stencils, obeying the discrete local maximum principle, for two(and higher)-dimensional problems with mixed derivatives is a considerable challenge. Not much has been done in this direction even for the discrete elliptic (or fully implicit) problem but aiming for a generalized local maximum principle \cite{Brandt} or the global maximum principle \cite{BH}. 

In the computational finance literature the issue is investigated by, e.g. Zvan et al. \cite{Zvan}, whereas Ikonen and Toivanen manage to construct a monotone (of positive type) scheme on compact stencil with fully implicit time stepping for the Heston model but under rather restrictive conditions on the mesh \cite{IT}. 

We first present the definitions we further refer to.
\begin{definition} \cite{BH}
A matrix $\textbf{A}$ is said to be \emph{monotone} if $\textbf{A}x \geq 0$ implies $x \geq 0$ for any vector $x$ (to be understood element-wise). 
\end{definition}
\begin{definition} \cite{BH}
An $N \times N$ matrix $\textbf{B}$ with elements $b_{ij}$ is said to be \emph{of positive type} if the following conditions are satisfied:
\begin{itemize}
\item[$\cdot$] $b_{ij} \leq 0, \;\; i \neq j$ (sign condition);
\item[$\cdot$] $\sum_k b_{jk} \geq 0$ for all $j$ with $\sum_k b_{jk} > 0$ for $j \in J(\textbf{B}) \neq 0$ (diagonal dominance, strict for $j \in J(\textbf{B})$);
\item[$\cdot$] for $i \notin J(\textbf{B})$ there exists a finite sequence of non-zero elements of the form $b_{ik_1},b_{k_1k_2},\dots,b_{k_rj}$ where $j \in J(\textbf{B})$ (connection in $\textbf{B}$ from $i$ to $J(\textbf{B})$). 
\end{itemize}
\end{definition}

\begin{theorem}\label{posmatrices}
The (implicit) l.h.s. matrices $\textbf{E}_1,\textbf{E}_2$ are (essentially) of positive type.
\end{theorem}
\begin{proof}
We consider the interior entries of the matrix $\textbf{E}_1$, given by \eqref{interiorelem}. Since
\begin{equation*}
\frac{q_{1,i+1/2}(y_{j})x_{i+1}^{\alpha_1(y_j)}}{x_{i+1}^{\alpha_1(y_j)}-x_{i}^{\alpha_1(y_j)}}=p_{1,i+1/2}(y_{j})\frac{\alpha_1(y_j)}{1-\chi^{\alpha_1(y_j)}}>0, \;\; \chi=\frac{x_{i}}{x_{i+1}},
\end{equation*}
we have $e_{i,i+1}<0$ and analogously $e_{i,i-1}<0$. If the computations are performed for $\beta=0$ then $c_1(y)$ has random sign which results in $\tau=O(1)$ (mild) restriction on the temporal step that guarantees positivity of the diagonal entry $e_{i,i}$ and diagonal dominance. For $\beta>0$ large enough no restriction will be present. Therefore, the 'interior' sub-matrix of $\textbf{E}_1$ for $i=2,\dots,N$ is of positive type which also means it is monotone. 

Further, we investigate the entry $e_{1,0}$, corresponding to the degenerate flux \eqref{modifiedelem}. One may introduce restrictions of the parameters $\xi$ and $\mu$ to ensure the non-positive sign of the entry, e.g. $\mu=0$. We consider the general case of no restrictions on the parameters so that $e_{1,0}>0$. This results in violation of the sign condition and $\textbf{E}_1$ can no longer be considered of positive type. However, \emph{the violation occurs only at the node, adjacent to the boundary}. We can reduce the dimension of the discrete problem by removing $u_{0,j}$ so that we have
$$ 
e_{1,1}u_{1,j}+e_{1,2}u_{2,j}=f_1(y_j)-e_{1,0}u_{0,j}.
$$
Since $e_{1,0}=-\frac{x_{1/2}}{2}\left(p_1(y_j) - q_1(y_j) \right)=O(h_i^x)$ the positivity of the right-hand side is guaranteed for $\tau=O(1)$, regardless whether we consider homogeneous boundary conditions or not. The positivity of $e_{1,1}$ also follows by the same restriction and we have that the reduced matrix $\textbf{E}_1$ is of positive type.
The analysis of the system matrix $\textbf{E}_2$, corresponding to the second sub-problem \eqref{SecondSubProb},\eqref{SecondSubProbBC} is analogous and the reduced system is also of positive type which implies existence and uniqueness of the discrete solution. $\Box$
\end{proof}

Let $\mathcal{L}_h$ is a finite-difference operator. We now recall the definition for discrete operators of positive type
$$
\mathcal{L}_h y := y_{i_1,i_2}^{n+1} - \sum_{(S_{j_1},S_{j_2},t_k) \in \mathcal{S}}^{(i_1,i_2,n+1) \neq (j_1,j_2,l)} b_{j_1,j_2}^l (\tau,h) y_{j_1,j_2}^l, \;\; b_{j_1,j_2}^l \geq 0
$$
on the compact stencil $\mathcal{S}$ that satisfy the following discrete maximum principles.
\begin{definition} \cite{Brandt}
If $\mathcal{L}_h u(x) > 0$ then $u(x)$ is either negative or less than $u(x')$ for some neighbouring grid point $x' \in \mathcal{S}$ and we say that $\mathcal{L}_h$ satisfies the local maximum principle.
\end{definition}
\begin{definition} \cite{Brandt}
If $\mathcal{L}_h u(x) > 0$ for all points $x$ of the discrete domain $\Omega_h$ then it attains its maximum on the boundary of $\Omega_h$ and we say that $\mathcal{L}_h$ satisfies the global maximum principle.
\end{definition} 

Therefore, by Theorem \ref{posmatrices}, the global maximum principle is valid for the discrete problem \eqref{fulldiscpr1} whereas local maximum principle is valid for the reduced problem of the intermediate discrete solution $\bar{u}$ and therefore existence, uniqueness and positivity follow (unconditionally w.r.t. the space discretization step).

Let us now consider the discrete problem for the the numerical solution on the new time level $\hat{u}$. Since $\textbf{E}_2$ is of positive type we have existence and uniqueness of the discrete solution (unconditionally w.r.t. the space discretization step). However, since the applied time discretization is not fully implicit but of IMEX type as the mixed derivative term is treated explicitly w.r.t. $\bar{u}$ the r.h.s. (containing the information from the intermediate solution $\bar{u}$) is nonnegative when $\tau=O(h^2), \; h=\min_{i,j}\lbrace h_i^x,h_j^y \rbrace$ i.e. positivity is conditional.

\section{Numerical Experiments}\label{NumExp}

Numerical experiments, presented in this section, illustrate the properties of the constructed method. We solve numerically various European Test Problems (TP) with different pay-off conditions and different choices of parameters.
\begin{enumerate}
  \item ($TP1$). \emph{Call option} with final condition \eqref{FinalC1}. Parameters: $X=100$, $Y=1$, $T=1$, $\zeta=0.01$, $r=0.1$, $\rho=0.9$, $\xi=1$, $\mu=0$ and $E=57$.
  \item ($TP2$). \emph{Call option} with digital pay-off \eqref{FinalC2}. Parameters: $X=100, \; Y=0.36, \; T=1, \; \zeta=0.01, \; r=0.1, \; \rho=0.9, \; \xi=1, \; \mu=0, \; B=1, \; E=57$.
  \item ($TP3$). \emph{A portfolio of options}. We assume that the final condition is a 'butterfly spread' delta function, defined by
\begin{equation*}
u_T(x,y)=\left\{
       \begin{array}{ll}
         1 , & x \in (X_1,X_2), \\
         -1 , & x \in (X_2,X_3), \\
         0 , & otherwise,
       \end{array}
     \right.
\end{equation*}
and the boundary conditions are assumed to be homogeneous. It arises from a portfolio of three types of options with different exercise prices. Parameters: $X=100$, $Y=0.36$, $T=1$, $X_1=40$, $X_2=50$, $X_3=60$, $\zeta=0.01$, $r=0.1$, $\rho=0.9$, $\xi=1$, $\mu=0$, $B=1$ and $E=57$.
\end{enumerate}

In the tables below are presented the computed $C$ and $L^2$ discrete norms of the error $E=\hat{u}^K-u^K$ by the formulas
\begin{equation*}
\left\|E \right\|_C =\mathop{\max}\limits_{i,j}\left|{\hat{u}_{i,j}^{K} - u_{i,j}^{K}} \right|, \; \left\|E \right\|_{L_2}=\sqrt{\sum\limits_{i = 0}^N {\hbar_i^x \hbar_j^y \left({\hat{u}_{i,j}^{K} - u_{i,j}^{K}}\right)^2}}.
\end{equation*}
We also introduce the root mean square error ($RMSE$) on a specific region
\begin{equation*}
\left\|E \right\|_{RMSE}=\sqrt{\frac{1}{N_{br}}\sum_{i,j}^{br}\left({\hat{u}_{i,j}^{K} - u_{i,j}^{K}}\right)^2},
\end{equation*}
where $N_{br}$ is the number of mesh points in the region we are interested in.
The rate of convergence (RC) is calculated using the double mesh principle
\begin{equation*}
RC=\log_{2}(E^{N,M}/E^{2N,2M}),\;\; E^{N,M}=\|\hat{u}^{N,M}-u^{N,M}\|,
\end{equation*}
where $\|\cdot\|$ is the mesh norm, $u^{N,M}$ and $\hat{u}^{N,M}$ are respectively the exact solution and the numerical solution, computed at the mesh with $N$ and $M$ subintervals in directions $x$ and $y$ respectively.

Table \ref{t1} presents numerical experiments for the exact solution $u=x\exp(-yt)$ with $K=4096$. The choice of this function is motivated by the analytic solution for $\rho=0$, given in \cite{HW}. 
Let us note that when using an exact solution to test the numerical method a r.h.s. arises. The following domain-defining parameters are used: $X=Y=T=1, \; \xi=1$ and $\zeta=0.01$ while the other parameters are selected as $\rho=0.5, \; r=\mu=0$. The results show that the splitting scheme (SplittingFVM) is first-order in space on uniform grid.
\begin{table}[h]
\begin{footnotesize}
\caption{}
\label{t1}
\begin{tabular*}{\textwidth}{@{\extracolsep{\fill}}rcccccccc}
\hline\\[-0.1in]
&&\multicolumn{3}{c}{SplittingFVM}
&&\multicolumn{3}{c}{2DFVM}
\\[0.05in]\cline{3-5}\cline{7-9}\\[-0.1in]
\multicolumn {1}{c}{$N \times M$}
&&\multicolumn {1}{c}{$E^N_\infty$}& $RC$& Norm. CPU
&&\multicolumn {1}{c}{$E^N_\infty$}& $RC$& Norm. CPU
\\[0.03in]
\hline \hline \\[-0.1in]
8x8 && 1.924e-2 & \emph{-} & 1.00 && 1.078e-2 & \emph{-} & 6.62 \\
16x16 && 9.917e-3 & \emph{0.96} & 4.04 && 5.362e-3 & \emph{1.01} & 26.00 \\
32x32 && 4.995e-3 & \emph{0.99} & 16.48 && 2.664e-3 & \emph{1.01} & 105.58 \\
64x64 && 2.502e-3 & \emph{1.00} & 67.89 && 1.327e-3 & \emph{1.01} & 424.03 \\
128x128 && 1.252e-3 & \emph{1.00} & 256.25 && 6.617e-4 & \emph{1.00} & 1776.35 \\
\end{tabular*}
\end{footnotesize}
\end{table}

Comparison of the SplittingFVM with the two-dimensional finite volume method (2DFVM), constructed in \cite{SWHH1}, is also given in Table \ref{t1}. The CPU times are normalized as the time of the splitting scheme on the grid $8 \times 8 \times 4096$ stands for the measure. We observe solid advantage of the splitting method in terms of computational efficiency. The number of the arithmetic operations for computing the numerical solution on the new time level for the SplittingFVM and 2DFVM can be investigated by similar considerations as given in \cite{Sam}.

The exact solution $u(x,y,t)=x\exp(-yt)$ and the corresponding numerical solution, generated by the presented method, are depicted in Figures \ref{exactsol} and \ref{exactnumsol}.
\begin{figure}[htbp]
\hfill%
\begin{minipage}[b]{0.50\textwidth}
\centering
\includegraphics[width=\textwidth]{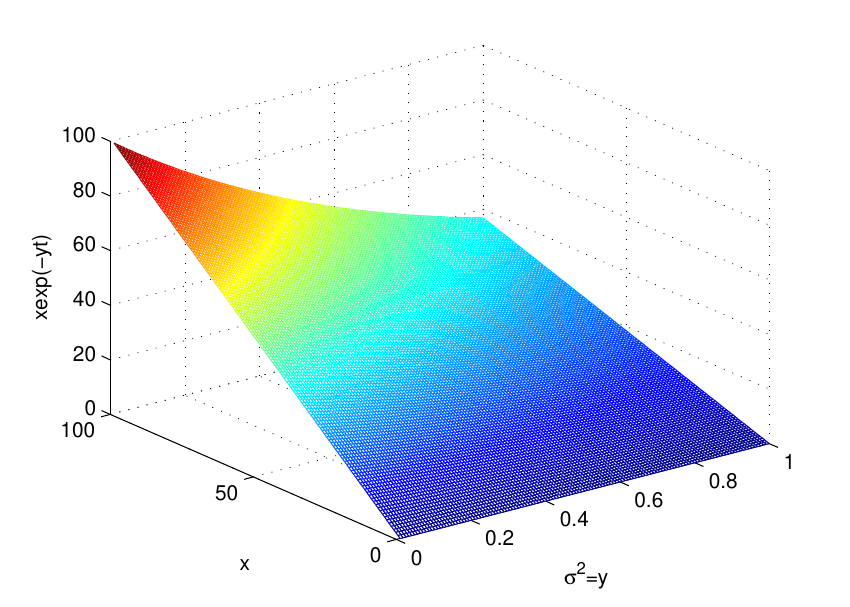}
\caption{exact solution}\label{exactsol}
\end{minipage}%
\hfill%
\begin{minipage}[b]{0.50\textwidth}
\centering
\includegraphics[width=\textwidth]{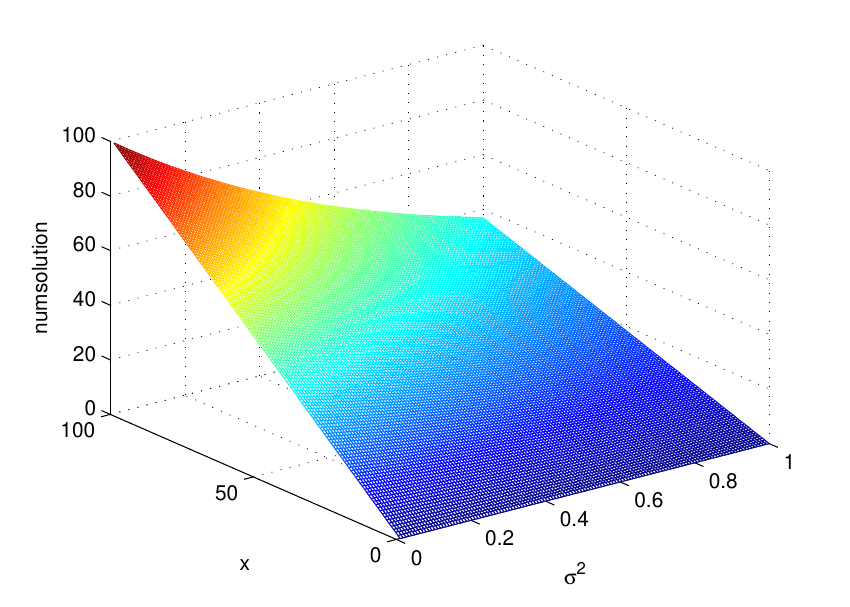}
\caption{numerical solution}\label{exactnumsol}
\end{minipage}\hfill\hbox{}%
\end{figure}

Table \ref{t2} shows the temporal convergence of the numerical solution to the chosen exact solution, $u=x\exp(-yt)$, of \eqref{HWPDE}. We use the same parameters as in Table \ref{t1}: $X=Y=T=1$, $\xi=1$ and $\zeta=0.01$. However, the size of the spatial mesh is now fixed to $512 \times 512$ as the time step varies. The obtained results show that our numerical method profits from the boundary corrections since it is able to sustain the first order of temporal convergence.
\begin{table}[h]
\begin{small}
\caption{}
\label{t2}
\begin{tabular*}{\textwidth}{@{\extracolsep{\fill}}rcccccccccc}
\hline\\[-0.1in]
&&\multicolumn{4}{c}{$\rho=0.5,r=0,\mu=0$}
&&\multicolumn{4}{c}{$\rho=0.9,r=0.1,\mu=0.1$}
\\[0.05in]\cline{3-6}\cline{8-11}\\[-0.1in]
\multicolumn {1}{c}{$K$}
&&\multicolumn {1}{c}{$E^N_\infty$}& $RC$& $E^N_2$& $RC$
&&\multicolumn {1}{c}{$E^N_\infty$}& $RC$& $E^N_2$& $RC$
\\[0.03in]
\hline \hline \\[-0.1in]
16 && 2.000e-2 & \emph{-} & 7.138e-3 & \emph{-} && 3.235e-2 & \emph{-} & 1.157e-2 & \emph{-} \\
32 && 9.859e-3 & \emph{1.02} & 3.585e-3 & \emph{0.99} && 1.562e-2 & \emph{1.05} & 5.753e-3 & \emph{1.01} \\
64 && 4.848e-3 & \emph{1.02} & 1.796e-3 & \emph{1.00} && 7.549e-3 & \emph{1.05} & 2.864e-3 & \emph{1.01} \\
128 && 2.398e-3 & \emph{1.02} & 8.980e-4 & \emph{1.00} && 3.721e-3 & \emph{1.02} & 1.427e-3 & \emph{1.01} \\
256 && 1.197e-3 & \emph{1.00} & 4.477e-4 & \emph{1.00} && 1.862e-3 & \emph{1.00} & 7.099e-4 & \emph{1.01} \\
\end{tabular*}
\end{small}
\end{table}

The equation \eqref{HWPDE} degenerates at $x=0$ and the problem is convection-dominated in this region. One may consider the application of non-uniform grids, analogously to the mesh refinement approach, widely used for singularly perturbed problems \cite{GR}. We present numerical results in Table \ref{t3} with the exact solution $u=x\exp(-yt)$ with $K=1024$ time layers, refining the region of $x=0$,
\begin{equation*}
\eta_i=i\Delta \eta, \; \Delta \eta = \frac{1}{M} \sinh^{-1}(X/d), \; x_i=d \sinh(\eta_i), \; i=0,\dots,N
\end{equation*}
%
\begin{table}[h]
\begin{small}
\caption{}
\label{t3}
\begin{tabular*}{\textwidth}{@{\extracolsep{\fill}}rcccccccccc}
\hline\\[-0.1in]
&&\multicolumn{4}{c}{$h_i^x=d (\sinh(\eta_i)-\sinh(\eta_{i-1}))$}
&&\multicolumn{4}{c}{$h_i^x=X/N$}
\\[0.05in]\cline{3-6}\cline{8-11}\\[-0.1in]
\multicolumn {1}{c}{$N \times M$}
&&\multicolumn {1}{c}{$E^N_\infty$}& $RC$& $E^N_{RMSE}$& $RC$
&&\multicolumn {1}{c}{$E^N_\infty$}& $RC$& $E^N_{RMSE}$& $RC$
\\[0.03in]
\hline \hline \\[-0.1in]
16x128 && 2.9859 & \emph{-} & 0.1301 & \emph{-} && 1.5406 & \emph{-} & 0.7970 & \emph{-} \\
32x128 && 0.9504 & \emph{1.65} & 0.0410 & \emph{1.67} && 0.7703 & \emph{0.99} & 0.3055 & \emph{1.38} \\
64x128 && 0.2640 & \emph{1.85} & 0.0122 & \emph{1.87} && 0.3851 & \emph{1.00} & 0.1159 & \emph{1.40} \\
128x128 && 0.0815 & \emph{1.70} & 0.0029 & \emph{1.95} && 0.1926 & \emph{1.00} & 0.0425 & \emph{1.45} \\
\end{tabular*}
\end{small}
\end{table}
The root mean square error is computed on the region $[0,0.1X] \times [\zeta,Y]$. One observes improvement of the rate of convergence in both norms when using the discussed nonuniform mesh.

We now solve numerically the original problem $TP1$, characterized by non-smoothness of the pay-off \eqref{FinalC1} on an uniform spatial mesh sized $N \times N$ with $2N$ time layers. The boundary conditions (b.c.) in direction $y$ are derived as explained in Section \ref{DiffProblem}, see Figures \ref{bcetatp1},\ref{bcYtp1}. 
In the following Table \ref{t4} the mesh $C$-norm and $RMSE$-norm are computed w.r.t. the numerical solution on a very fine mesh sized $512 \times 512 \times 1024$. The root mean square error is computed on the region $[0.9E,1.1E] \times [\zeta,Y]$ and the numerical solution of $TP1$ is depicted in Figure \ref{optionvaluetp1}.
\begin{figure}[htbp]
\hfill%
\begin{minipage}[b]{0.45\textwidth}
\centering
\includegraphics[width=\textwidth]{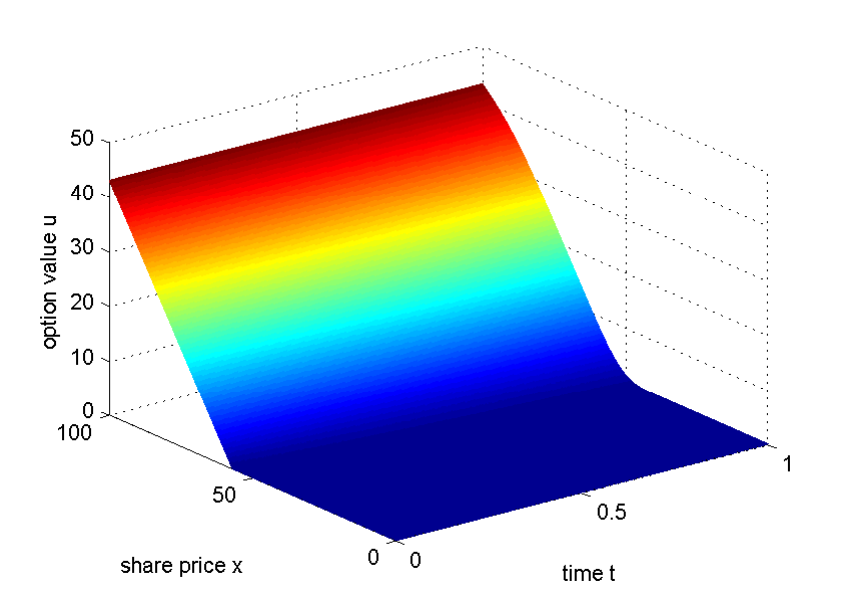}
\caption{b.c. $y=0.01$}\label{bcetatp1}
\end{minipage}%
\hfill%
\begin{minipage}[b]{0.45\textwidth}
\centering
\includegraphics[width=\textwidth]{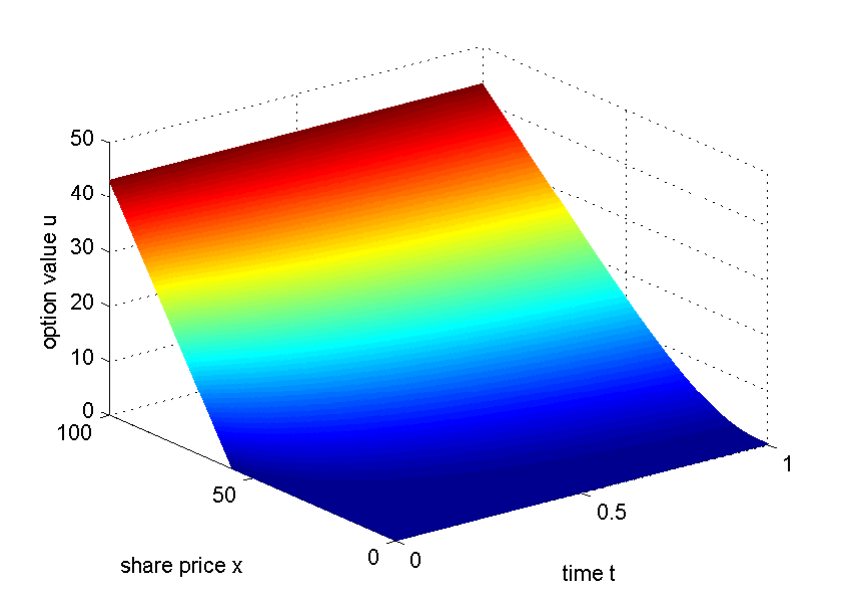}
\caption{b.c. $y=Y$}\label{bcYtp1}
\end{minipage}\hfill\hbox{}%
\end{figure}
\begin{table}
\caption{}
\label{t4}
\centering
\begin{tabular}{cccccc}
\hline
\centering
N & 16 & 32 & 64 & 128 & 256 \\
\hline \hline
$E_{\infty}$ & 2.0678 & 0.9911 & 0.4559 & 0.1944 & 0.0649 \\
        &  & \emph{(1.061)} & \emph{(1.120)} & \emph{(1.230)} & \emph{(1.584)} \\
$E_{RMSE}$ & 0.2649 & 0.1197 & 0.0551 & 0.0236 & 0.0079 \\
        & & \emph{(1.146)} & \emph{(1.119)} & \emph{(1.223)} & \emph{(1.571)} \\
\hline
\end{tabular}
\end{table}
\begin{figure}[htbp]
\hfill%
\begin{minipage}[b]{0.45\textwidth}
\centering
\includegraphics[width=\textwidth]{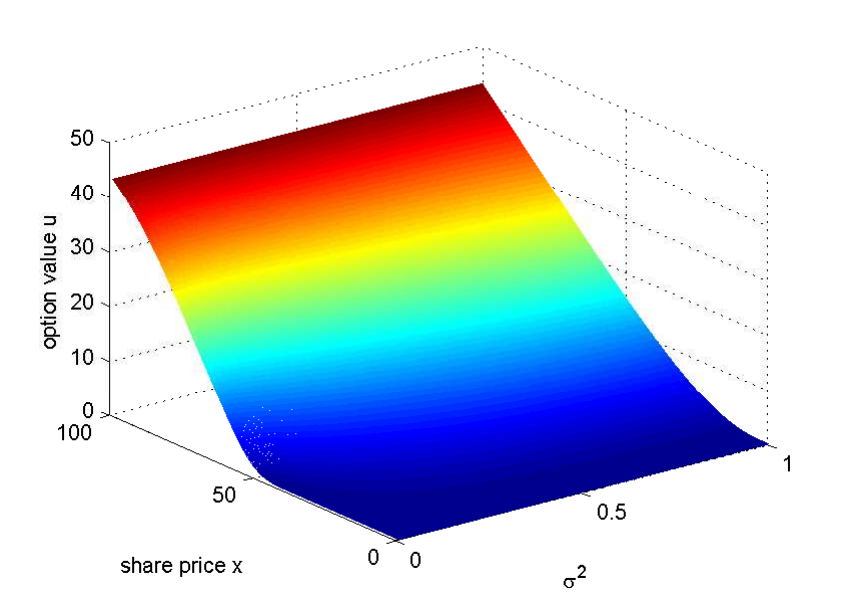}
\caption{option value $TP1$}\label{optionvaluetp1}
\end{minipage}%
\hfill%
\begin{minipage}[b]{0.45\textwidth}
\centering
\includegraphics[width=\textwidth]{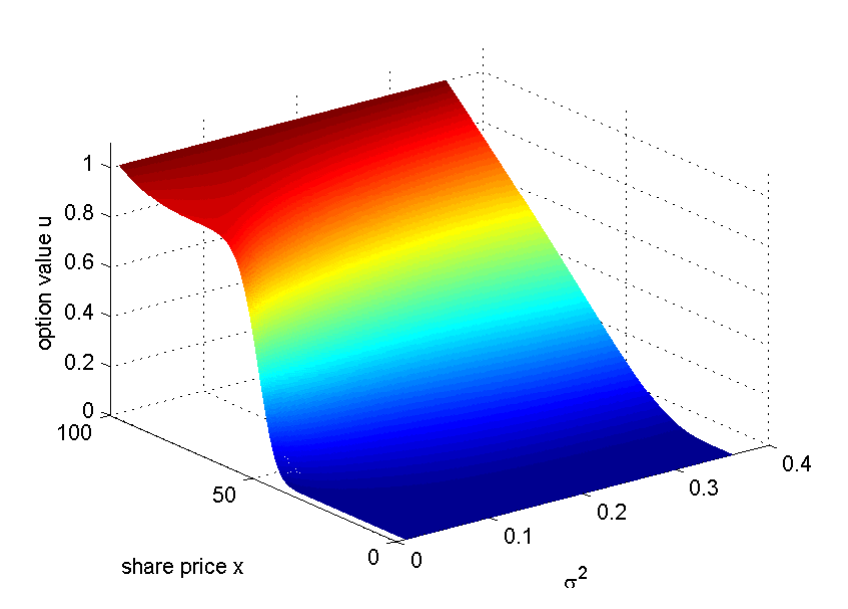}
\caption{option value $TP2$}\label{optionvaluetp2}
\end{minipage}\hfill\hbox{}%
\end{figure}

The discontinuity of the pay-off characterizes the test problems $TP2$ and $TP3$, seriously deteriorating the accuracy. Table \ref{t5} shows results for $TP2$ on a non-uniform mesh, refined in the vicinity of $x=E$, and on an uniform mesh. We use the numerical solution on the fine grid $512 \times 512 \times 1024$ as an exact solution. The mesh size is $N \times N$ with $2N$ time layers and the nodes are generated by the formulas \cite{Hout,TR} with $c=E/5$
\begin{eqnarray*}
& \eta_i=\sinh^{-1}(-E/c) + i\Delta \eta, \; \Delta \eta = \frac{1}{N} \left[ \sinh^{-1}((x-E)/c) - \sinh^{-1}(-E/c) \right], \\
& x_i=E + c \sinh(\eta_i), \; i=0,\dots,N,
\end{eqnarray*}
while the root mean square error is computed on the region $[0.9E,1.1E] \times [\zeta,Y]$. Again, the boundary conditions in direction $y$ are obtained as explained in Section \ref{DiffProblem}, Figures \ref{bcetatp2}, \ref{bcYtp2}, while the numerical solutions for $TP2$ and $TP3$ are shown in Figures \ref{optionvaluetp2}, \ref{optionvaluetp3} respectively.
\begin{table}[h]
\begin{small}
\caption{}
\label{t5}
\begin{tabular*}{\textwidth}{@{\extracolsep{\fill}}rcccccccccc}
\hline\\[-0.1in]
&&\multicolumn{4}{c}{$h_i^x=c (\sinh(\eta_i)-\sinh(\eta_{i-1}))$}
&&\multicolumn{4}{c}{$h_i^x=X/N$}
\\[0.05in]\cline{3-6}\cline{8-11}\\[-0.1in]
\multicolumn {1}{c}{$N$}
&&\multicolumn {1}{c}{$E^N_\infty$}& $RC$& $E^N_{RMSE}$& $RC$
&&\multicolumn {1}{c}{$E^N_\infty$}& $RC$& $E^N_{RMSE}$& $RC$
\\[0.03in]
\hline \hline \\[-0.1in]
32 && 5.953e-2 & \emph{-} & 1.510e-2 & \emph{-} && 8.033e-2 & \emph{-} & 1.903e-2 & \emph{-} \\
64 && 2.642e-2 & \emph{1.17} & 7.040e-3 & \emph{1.10} && 1.442e-2 & \emph{2.48} & 2.412e-3 & \emph{2.98} \\
128 && 1.157e-2 & \emph{1.19} & 3.044e-3 & \emph{1.21} && 1.619e-2 & \emph{-0.17} & 6.630e-3 & \emph{-1.46} \\
256 && 3.802e-3 & \emph{1.61} & 1.018e-3 & \emph{1.58} && 5.497e-3 & \emph{1.56} & 2.204e-3 & \emph{1.59} \\
\end{tabular*}
\end{small}
\end{table}

\begin{figure}[htbp]
\hfill%
\begin{minipage}[b]{0.45\textwidth}
\centering
\includegraphics[width=\textwidth]{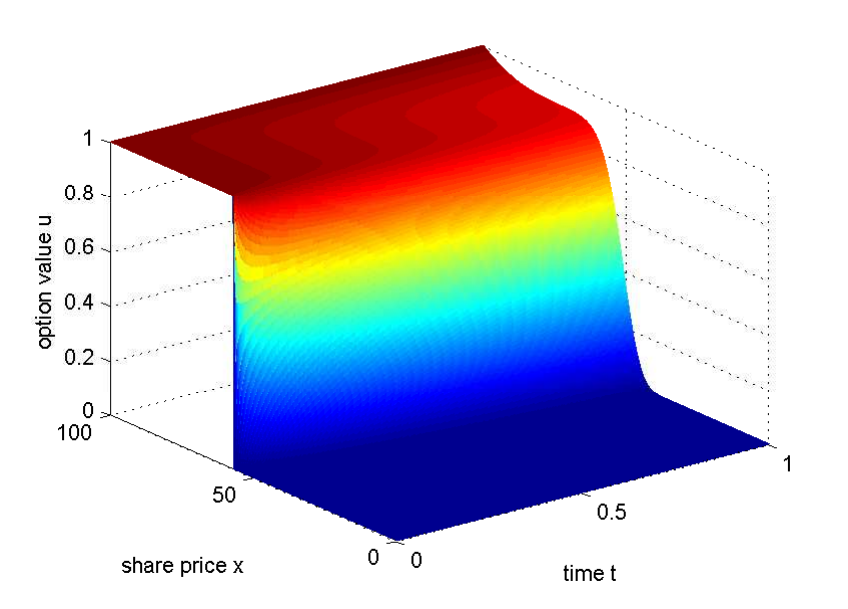}
\caption{b.c. $y=0.01$ $TP2$}\label{bcetatp2}
\end{minipage}%
\hfill%
\begin{minipage}[b]{0.45\textwidth}
\centering
\includegraphics[width=\textwidth]{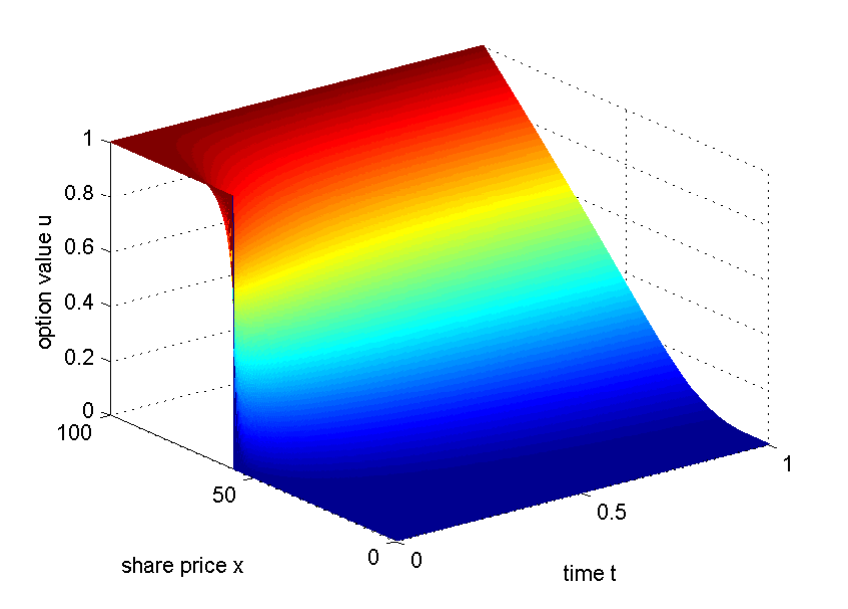}
\caption{b.c. $y=Y$ $TP2$}\label{bcYtp2}
\end{minipage}\hfill\hbox{}%
\end{figure}

\begin{figure}[htbp]
\hfill%
\begin{minipage}[b]{0.45\textwidth}
\centering
\includegraphics[width=\textwidth]{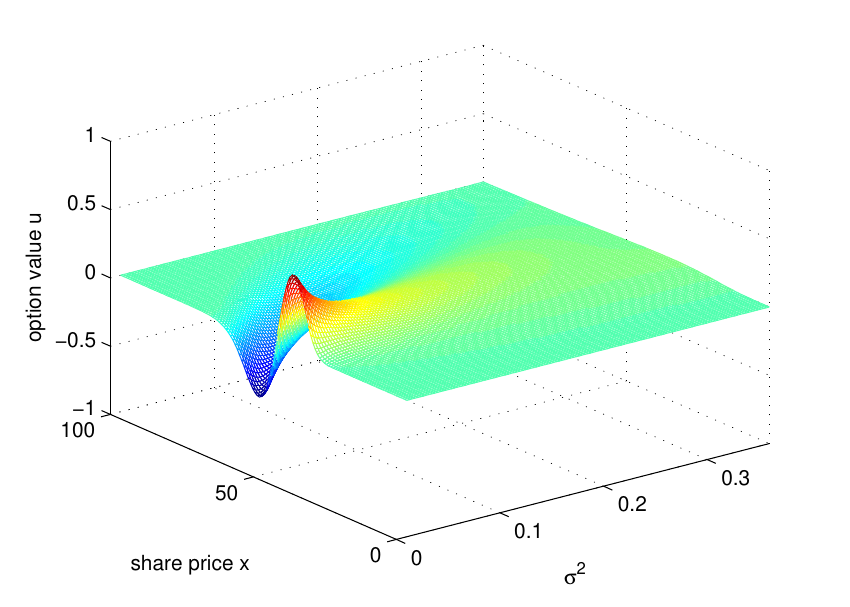}
\caption{option value $TP3$}\label{optionvaluetp3}
\end{minipage}%
\hfill%
\begin{minipage}[b]{0.45\textwidth}
\centering
\includegraphics[width=\textwidth]{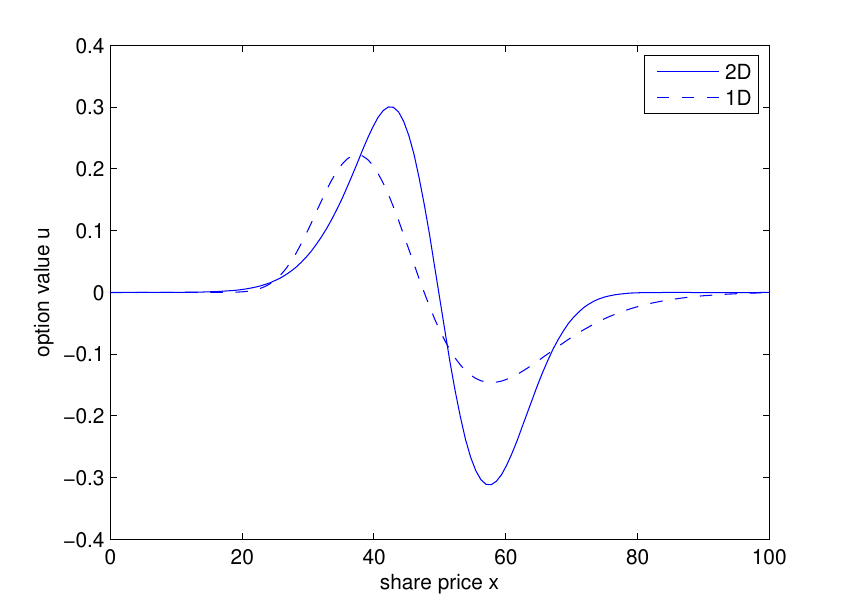}
\caption{2D1D $TP3$ $\sigma \approx 0.20$}\label{comparisontp3}
\end{minipage}\hfill\hbox{}%
\end{figure}

We now present numerical experiments for $\zeta=0$ - a particularly interesting case since one considers degeneration in $y$-direction. The boundary condition on $y=0$ is obtained by taking in consideration the deterministic growth of the asset when volatility is zero and therefore we obtain 
\begin{equation*}
u(x,0,t)=e^{-rt}u_T(xe^{rt}).
\end{equation*}
It also satisfies the PDE \eqref{HWPDE2} if $y=0$ and therefore we speak of a \emph{natural boundary condition}. Let us note that one now fixes the non-compatibility of the boundary condition at $x=X$ $u_D(X,y,t)=u_T(X,y)$ with the new boundary condition at $y=0$ by taking the discount factor into account. We stress that the degeneration influences both sub-problems \eqref{FirstSubProb},\eqref{SecondSubProb}. The application of the finite volume method in sub-section \ref{FullDisc} treats the degeneration in the second sub-problem. The boundary corrections, applied to the first sub-problem, have to be computed for $\bar{\alpha}_{i} = \frac{\bar {b}_{i+1/2}}{\bar {a}_{i+1/2}}$ and therefore we set $\bar{\alpha}_{i}$ large enough in order to perform the computations since $\bar {a}_{i+1/2}=0$ if $y=0$. 
\begin{table}[h]
\caption{}
\label{t6}
\centering
\begin{tabular}{cccccc}
\hline
\centering
N & 16$\times$16$\times$32 & 32$\times$32$\times$64 & 64$\times$64$\times$128 & 128$\times$128$\times$256 \\
\hline \hline
$E_{\infty}$ & 2.531 & 1.180 & 0.497 & 0.161 \\
        &  & \emph{(1.102)} & \emph{(1.248)} & \emph{(1.628)} \\
$E_{RMSE}$ & 0.477 & 0.229 & 0.101 & 0.035 \\
        & & \emph{(1.061)} & \emph{(1.178)} & \emph{(1.545)} \\
\hline
\end{tabular}
\end{table}


Convergence results for the original problem $TP1$ for $\zeta=0$ w.r.t. the numerical solution on $256 \times 256 \times 512$ are presented in Table \ref{t6}. We conclude that the numerical method performs well in the case of degeneration in $y$-direction. The $C$ mesh norm error for $\zeta=0$, corresponding to Figures \ref{exactsol},\ref{exactnumsol} on the mesh, sized $32 \times 32 \times 64$, is visualized on Figure \ref{maxerrorexactsol}, while the $C$ norm error for $TP1$, $\zeta=0$ is plot on Figure \ref{maxerrortp1}. 
\begin{figure}[htbp]
\hfill%
\begin{minipage}[b]{0.40\textwidth}
\centering
\includegraphics[width=\textwidth]{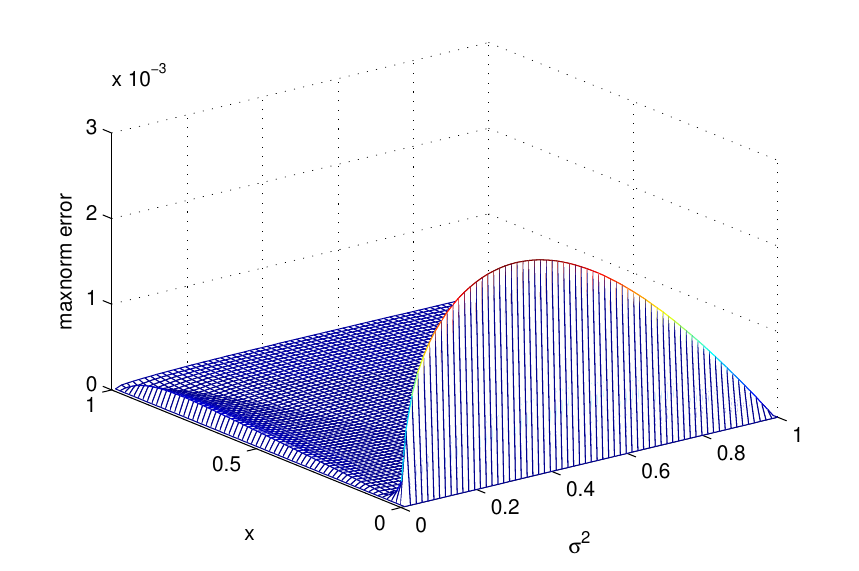}
\caption{maxnorm error exact solution $32 \times 32 \times 64$ $\zeta=0$}\label{maxerrorexactsol}
\end{minipage}%
\hfill%
\begin{minipage}[b]{0.40\textwidth}
\centering
\includegraphics[width=\textwidth]{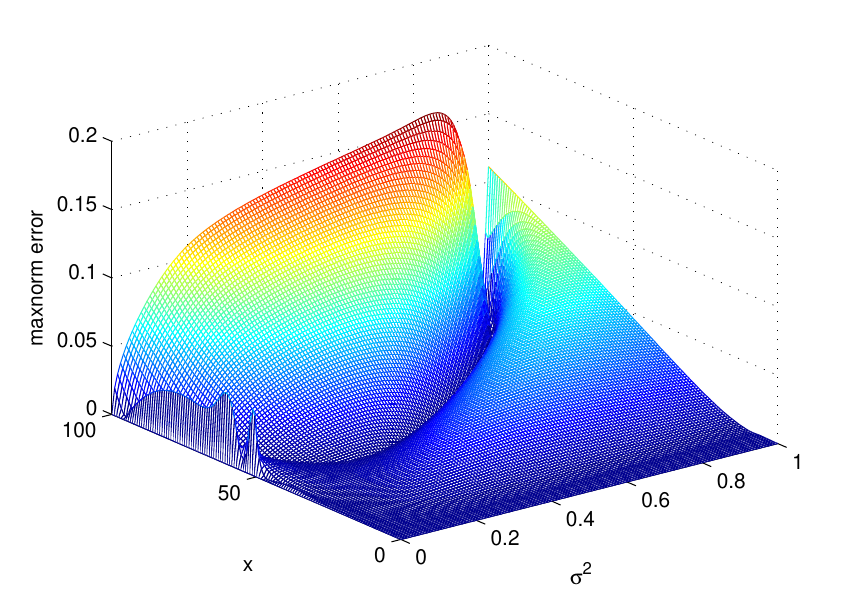}
\caption{maxnorm error $TP1$ $128 \times 128 \times 256$ $\zeta=0$}\label{maxerrortp1}
\end{minipage}\hfill\hbox{}%
\end{figure}

In order to show the effects for the variable stochastic volatility we plot the option values of the 2D and 1D simulations, applied to $TP1$-$TP3$ with and without the stochastic volatility being an independent variable. In the three Figures \ref{comparisontp3}, \ref{comparisontp1}, \ref{comparisontp2}  we see significant differences in those two simulations for fixed values of $\sigma=\sqrt{y}$.
\begin{figure}[htbp]
\hfill%
\begin{minipage}[b]{0.40\textwidth}
\centering
\includegraphics[width=\textwidth]{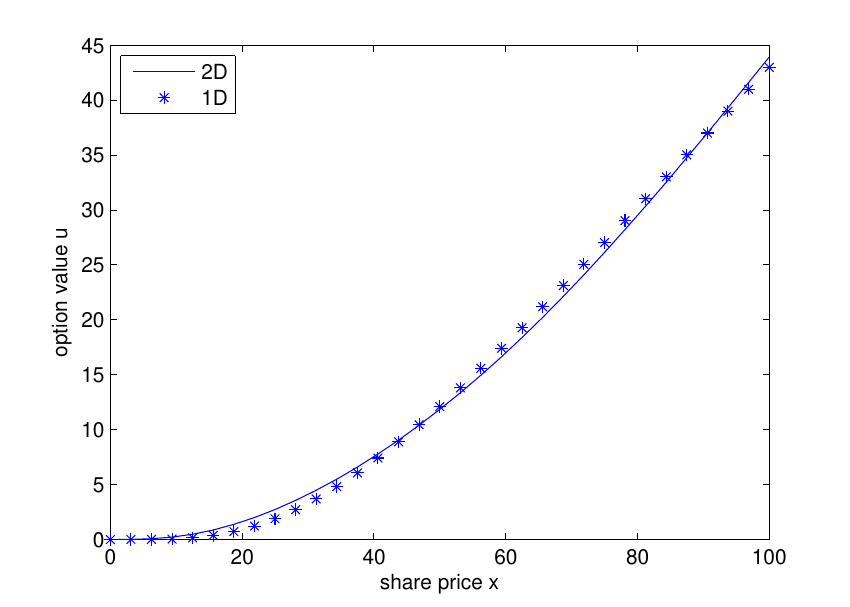}
\caption{2D1D $TP1$ $\sigma \approx 0.71$}\label{comparisontp1}
\end{minipage}%
\hfill%
\begin{minipage}[b]{0.40\textwidth}
\centering
\includegraphics[width=\textwidth]{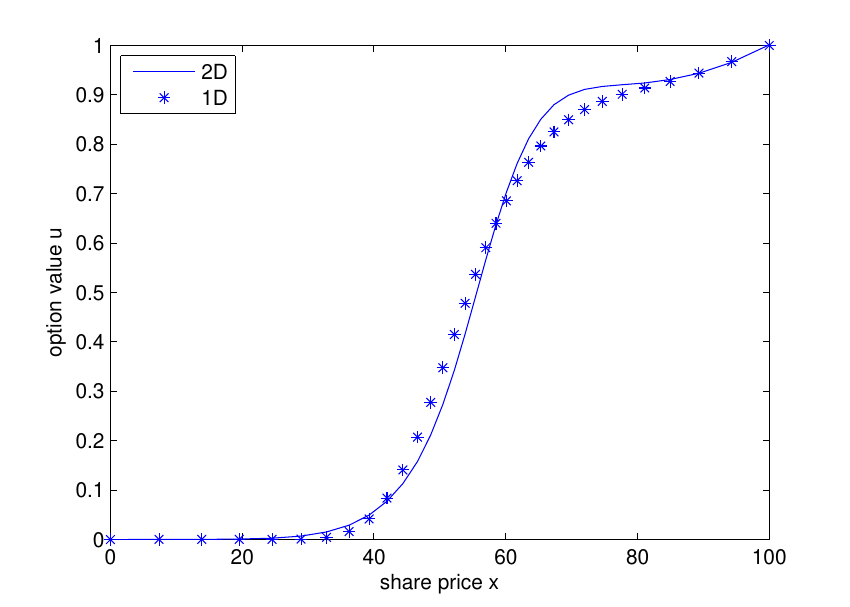}
\caption{2D1D $TP2$ $\sigma \approx 0.18$}\label{comparisontp2}
\end{minipage}\hfill\hbox{}%
\end{figure}

\section{Conclusion}

In this paper we solve numerically the Hull and White 2D problem \eqref{HWPDE}-\eqref{BC} for pricing European options with stochastic volatility. The proposed numerical method consists in LOD operator splitting while in space a fitted finite volume method is applied. We prove first-order convergence in time and present detailed considerations on the discrete maximum principle. The main advantages of the presented scheme are reduction of the computational costs and positivity of the numerical solution in time. Moreover, it produces satisfactory computational results even when degeneration on the boundary $y=0$ is also considered.

In a forthcoming paper we study the stability and the convergence of the proposed splitting finite volume method.

\textbf{Acknowledgement:} The authors would like to thank Prof. Karel in't Hout for his important remarks and suggestions on the differential problem and the numerical method. Also, we are grateful to Dr. Tihomir Gyulov for the helpful discussion on the semi-discrete problem.

This research was supported by the European Union in the FP7-PEOPLE-2012-ITN Program under Grant Agreement Number 304617 (FP7 Marie Curie Action, Project Multi-ITN STRIKE - Novel Methods in Computational Finance) and by the Sofia University Foundation under Grant No 106/2013. The second author is also supported by the Bulgarian National Fund under Project DID 02/37/09.

\end{document}